\newtheorem{theorem}{Theorem}[section]
\newtheorem{corollary}[theorem]{Corollary}
\newtheorem{lemma}[theorem]{Lemma}
\newtheorem{proposition}[theorem]{Proposition}
\newtheorem*{Landis}{Unique Continuation at Infinity}
\theoremstyle{definition}
\newtheorem{remark}{Remark}
\DeclareMathOperator{\dist}{dist}
\DeclareMathOperator\Tr{Tr}
\DeclareMathOperator*{\esssup}{ess\,sup}
\DeclareMathOperator*{\essinf}{ess{\,}inf}
\def\cprime{$'$}
\def\N{\mathbb{N}}
\def\R{\mathbb{R}}
\let\O=\Omega
\let\e=\varepsilon
\let\vp=\varphi
\let\vt=\vartheta
\let\t=\tilde
\let\ol=\overline
\let\mc=\mathcal
\def\1{\mathbbm{1}}
\newcommand{\su}[2]{\genfrac{}{}{0pt}{}{#1}{#2}}
\def\thm#1{Theorem~\ref{thm:#1}}
\def\seq#1{(#1_n)_{n\in\N}}
\def\inn{\quad\text{in }\;}
\def\for{\quad\text{for }\;}
\def\fall{\quad\text{for all }\;}
\def\step#1{\smallskip{\em Step #1.\,}}
\newenvironment{formula}[1]{\begin{equation}\label{#1}}
                       {\end{equation}\noindent}
\def\Fi#1{\begin{formula}{#1}}
\def\Ff{\end{formula}\noindent}
\def\pe{principal eigenvalue}
\def\MP{maximum principle}
\def\SMP{strong maximum principle}
\def\L{\mathcal{L}}
\def\P{\mc{P}}
\def\l{\lambda_1}
\def\UCI{\textbf{UCI}}
\title[\tiny The Landis conjecture with sharp rate of decay]
{The Landis conjecture \\ with sharp rate of decay}
\author{Luca  Rossi}
\thanks{CNRS, EHESS, PSL Research University, CAMS, Paris, France}
\begin{document}

\begin{abstract}
	The so called Landis conjecture states that if a solution of the equation 
	$$\Delta u+V(x)u=0$$
	in an exterior domain decays faster than $e^{-\kappa|x|}$, 
	for some $\kappa>\sqrt{\sup |V|}$, then it must be identically equal to $0$.
	This property can be viewed as a unique continuation at infinity (UCI) for solutions
	satisfying a suitable exponential decay.
	The Landis conjecture was disproved by Meshkov in the case of complex-valued functions,
	but it remained open in the real case.
	In the 2000s, several papers have addressed the issue of the UCI for linear elliptic operators
	with real coefficients. The results that have been obtained require some
	kind of sign condition, either on the solution or on the zero order coefficient of the equation. 
	The Landis conjecture is still open nowadays in its general form.

	In the present paper, we start with considering a
	general (real) elliptic operator in dimension $1$.
	We derive the UCI property with a rate of decay 
	$\kappa$ which is sharp when the coefficients of the operator are constant.
	In particular, we prove the Landis conjecture in dimension $1$,
	and we can actually reach the threshold value $\kappa=\sqrt{\sup |V|}$.
	Next, we derive the UCI property --~and then the Landis conjecture~--
	for radial operators in arbitrary dimension.
	Finally, with a different approach, we prove the same result 
	for positive supersolutions of general elliptic equations.
\end{abstract}

\maketitle

\vspace{-17pt}


\section{Introduction}

In \cite{KLandis}, Kondrat{\cprime}ev and Landis asked the following question:
if $u$ is a solution of the equation 
\Fi{Delta+V}
\Delta u+V(x)u=0
\Ff
in the exterior of a ball in $\R^N$, is it true that the condition
\Fi{HypL}
\exists\kappa>\sqrt{\sup |V|},\qquad
u(x) \prec e^{-\kappa|x|},
\Ff
necessarily implies $u\equiv0$ ?
Here and in the sequel, the expression $u \prec v$ means $u(x)/v(x)\to 0$ as $|x|\to\infty$.
They also addressed the same question under the stronger requirement 
that $u(x) \prec e^{-\kappa|x|}$ for all $\kappa>0$.

The question is motivated by the trivial observation that
in dimension $N=1$ with~$V$ constant, 
decaying solutions can only exist if $V<0$, and they decay as
$\exp(-\sqrt{|V|}|x|)$.
Hence, in such case, one can even take $\kappa=\sqrt{\sup |V|}$ in condition~\eqref{HypL}.
This is no longer true in higher dimension: 
the bounded, radial solution of 
$\Delta u-u=0$ outside a ball,
which can be expressed in terms of the modified Bessel function of second kind,
decays like $|x|^{-\frac{N-1}2}e^{-|x|}$.
As we will see in the sequel,
this discrepancy between one and multidimensional cases holds true
for general elliptic equations with variable coefficients.

The question by Kondrat{\cprime}ev and Landis received a negative answer in the 
paper~\cite{Mesh} by Meshkov. There, the author exhibits two complex-valued, bounded functions $u,V\not\equiv0$ 
satisfying the equation~\eqref{Delta+V}, with 
$|u(x)|\leq \exp(-h|x|^\frac43)$ for some $h>0$. 
On the other hand, Meshkov shows that the power $4/3$ is optimal,
in the sense that if one strengthens the decay condition
by $u(x)\prec \exp(-|x|^{\frac43+\e})$ for some $\e>0$, then necessarily $u\equiv0$.
These results provide a complete picture in the complex case.

The conjecture has been brought back to attention in the 2000s
by the works of Bourgain and Kenig \cite{BouKen}
and Kenig \cite{KenigLandis}.
In the former, the authors improved Meshkov's positive answer 
in the case of real-valued functions, pushing the decay condition up to
$u(x)\leq\exp(-h|x|^{\frac43}\log(|x|))$. 
However, there is not an analogue
of Meshkov's counterexample (nontrivial solutions with exponential decay with 
power larger than~$1$) in the real case.
This fact led Kenig to ask in \cite[Question 1]{KenigLandis} whether,
{\em in the real case}, the condition 
$$u(x)\prec e^{-|x|^{1+\e}}\fall\e>0$$
necessarily implies $u\equiv0$. 
Observe that this condition is stronger than the original requirement
\eqref{HypL} of \cite{KLandis}. However, even this weaker conjecture
is still open nowadays, except for some particular situations.
Kenig, Silvestre and Wang proved it in~\cite{KSW}
in dimension $N=2$ and under the
additional assumption that $V\leq0$.
The condition on the decay is 
$ u(x) \prec e^{-h|x|(\log|x|)^2}$ for some $h>0$, 
hence the result does not answer the original question in \cite{KLandis}.
In the case of equations set in the whole space $\R^2$,
the authors are able to handle more general uniformly elliptic
operators, still assuming $V\leq0$, see also~\cite{DKW}.
As observed in \cite{ABG}, for equations in the whole space,
this hypothesis implies that $u\equiv0$ just assuming that $u\prec1$,
as an immediate consequence of the maximum principle. 
We point out that the results of~\cite{KSW,DKW} are deduced from
a quantitative estimate which implies that
the set where a nontrivial solution is bounded from below by 
$e^{-h|x|(\log|x|)^2}$ is relatively dense in $\R^2$.

In this paper, we deal with uniformly elliptic operators with real coefficients,
whose general form is
$$\L u=\Tr(A(x)D^2 u)+q(x)\. Du+V(x)u,$$
defined on an {\em exterior domain} $\O\subset\R^N$, 
i.e., a connected
open set with compact complement. 
For general operators of this type, it is known 
since the work of Pli\'s~\cite{Plis} that the question asked by 
Kondrat{\cprime}ev and Landis has a (dramatically) negative answer.
Namely, Pli\'s exibhits an operator $\L$ in $\R^3$ with a H\"older-continuous matrix 
field~$A$ and smooth terms $q$, $V$ which admits a nontrivial solution 
vanishing identically outside a ball.
This is an astonishing counterexample to the property of {\em unique continuation 
at infinity}. Here we consider the following definition of such property.

\begin{Landis}[\UCI]
	We say that a given equation satisfies the {\em \UCI} with a rate of decay $\kappa$, 
	if the unique solution satisfying
	$$ u(x) \prec e^{-\kappa|x|}$$
	is $u\equiv0$.
\end{Landis}
The \UCI\ implies that two distinct solutions cannot have the same
behaviour at infinity up to an additive term decaying sufficiently fast.
Owing to Pli\'s' counterexample, the only hope to 
derive the \UCI\ is by requiring some additional hypotheses on the operator.
For instance, the results by Kenig and collaborators are restricted 
to dimension $N=2$, whereas the counterexample is in dimension $3$.
Another possible way to avoid the counterexample of \cite{Plis} is by assuming
a suitable regularity of the diffusion matrix $A$. It is indeed known that the
pathological situation of \cite{Plis}
cannot arise when $A$ is Lipschitz-continuous, see \cite{GLunique}.
However, these restrictions do not seem to be useful in an approach based on 
the comparison principle and Hopf's lemma, which is the one 
adopted in the present paper.

In the very recent paper \cite{ABG},
Arapostathis, Biswas and Ganguly attack the problem using probabilistic tools.
They derive the \UCI\ under the additional assumption that 
$u\geq0$, or, if $\O=\R^N$, that $\l\geq0$, where~$\l$ is the {\em generalised
	\pe} of the operator $-\L$, see the definition \eqref{l1} below.
We point out that the condition $\l\geq0$ is more general than both $V\leq0$ and $u\geq 0$.
The threshold for the rates of decay $\kappa$ obtained in \cite{ABG}
depends on the coefficients of~$\L$ and it
is not optimal when $q\not\equiv0$, see the discussion in the next subsection.

\subsection{Statement of the main results}
We consider a general (real) elliptic operator
$$\L u=\Tr(A(x)D^2 u)+q(x)\. Du+V(x)u,$$ 
in an exterior domain $\O\subset\R^N$.
We always assume that
the matrix field $A$ is bounded, continuous and uniformly elliptic, i.e., its smallest eigenvalue
$$\alpha(x):=\min_{\xi\in \R^N\setminus\{0\}}\frac{A(x)\xi\.\xi}{|\xi|^2}$$
satisfies $\inf_\O\alpha>0$. The vector field $q$ and the potential $V$ belong to $L^\infty(\O)$.
Solutions, subsolutions and supersolutions
of the equation $\L u=0$ are always assumed to belong to $W_{loc}^{2,N}$
and to satisfy respectively $\L u=0$, $\L u\geq 0$ and $\L u\leq 0$ a.e.
Observe that, by elliptic estimates, solutions 
actually belong to $W_{loc}^{2,p}$ for all $p<+\infty$.
In general, when referred to measurable functions, 
the equalities or inequalities are understood to hold a.e., and $\inf$, $\sup$
stand for $\essinf$, $\esssup$.

Our first result concerns the case of dimension $N=1$, where $\L$ is given by
$$
\L u=\alpha (x)u''+q(x)u'+V(x)u,
$$
defined on the half-line $\R^+=(0,+\infty)$.
\begin{theorem}\label{thm:1}
	In the case $N=1$, any solution of $\L u=0$ in $\R^+$ satisfies
	$$\varlimsup_{x\to+\infty}|u(x)|e^{\kappa x}\geq |u(x_0)|e^{\kappa x_0},$$
	for every $x_0>0$, where 
	\Fi{k1}
		\kappa=\sup\frac{|q|}{2\alpha}+\sqrt{\sup\frac{|q|^2}{4\alpha^2}
		+\sup\frac{|V|}\alpha}.
	\Ff
\end{theorem}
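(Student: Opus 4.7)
The plan is to reduce the theorem to a maximum-principle-type statement for a transformed equation, obtained by factoring out the conjectured decay rate. Writing $a := \sup |q|/(2\alpha)$ and $b := \sup |V|/\alpha$, the value $\kappa = a + \sqrt{a^2+b}$ from \eqref{k1} satisfies $\kappa^2 - 2a\kappa - b = 0$; this dominates the corresponding pointwise quadratic and gives $\alpha(x)\kappa^2 \geq |q(x)|\kappa + |V(x)|$ a.e. In particular, both exponentials $e^{\pm\kappa x}$ are (classical) subsolutions of $\L$, since $\L(e^{\pm\kappa x}) = (\alpha\kappa^2 \pm q\kappa + V)\, e^{\pm\kappa x} \geq 0$.

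Next I would substitute $\tilde u := e^{\kappa x} u$, transforming $\L u = 0$ into
$$\alpha \tilde u'' + B\,\tilde u' + c\,\tilde u = 0, \qquad B := q - 2\kappa\alpha, \quad c := \alpha\kappa^2 - q\kappa + V,$$
with $c \geq 0$ and $B \leq 0$ a.e., and in fact $B \leq -2\inf\alpha\cdot\sqrt{a^2+b} < 0$ uniformly whenever $a^2 + b > 0$ (the degenerate case $a = b = 0$ reduces to $u'' = 0$ and is trivial). The conclusion of the theorem then becomes $|\tilde u(x_0)| \leq \varlimsup_{x\to\infty} |\tilde u(x)|$.

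I would argue this by contradiction. Assume $\varlimsup |\tilde u| < |\tilde u(x_0)|$; replacing $u$ by $-u$ if necessary, $\tilde u(x_0) > 0$. Then $|\tilde u(x)| \leq \tilde u(x_0) - \delta$ eventually for some $\delta > 0$, so the supremum of $\tilde u$ over $[x_0, \infty)$ is attained at a finite $\bar x \geq x_0$ with $\tilde u(\bar x) \geq \tilde u(x_0) > 0$ and $\tilde u'(\bar x) = 0$. On the right-neighborhood of $\bar x$ where $\tilde u > 0$ and $\tilde u' \leq 0$, the ODE gives $\tilde u'' = -(B\tilde u' + c\tilde u)/\alpha \leq 0$, since $B\tilde u' \geq 0$ (both factors nonpositive) and $c\tilde u \geq 0$; thus $\tilde u$ is concave with non-increasing derivative on this region. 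Either $\tilde u' \equiv 0$ there, forcing $c\tilde u \equiv 0$ and hence $c \equiv 0$ and $\tilde u \equiv \tilde u(\bar x)$, contradicting $\varlimsup |\tilde u| < \tilde u(\bar x)$; or $\tilde u'$ becomes strictly negative somewhere, whereupon concavity sends $\tilde u$ linearly to $-\infty$, so $\tilde u$ must cross zero at some finite $\hat x$.

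The remaining difficulty, which I expect to be the main obstacle, is closing the argument under sign changes: after crossing zero, the symmetric analysis applies to $-\tilde u$ on the next excursion, and one must rule out amplitude decay at infinity across successive excursions. The plan is to iterate the concavity/convexity analysis and to track a weighted energy of the form $E := \mu\,[(\tilde u')^2 + \gamma\, \tilde u^2]$ with $\mu := \exp(\int B/\alpha)$ and $\gamma := c/\alpha$; the self-adjoint form $(\mu\tilde u')' = -\mu\gamma\tilde u$ together with the strict negativity of $B$ should yield a monotonicity of $E$ that forbids such decay. A possibly cleaner alternative is to argue directly on $\L$, constructing a barrier from linear combinations of the two subsolutions $e^{\pm\kappa x}$ and invoking a Hopf-type lemma at $x_0$.
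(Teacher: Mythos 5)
Your conjugation $\tilde u := e^{\kappa x}u$ is a sound reduction and the algebra is correct: with $\kappa$ defined by \eqref{k1} one does get $\alpha\kappa^2 \geq |q|\kappa + |V|$ a.e., hence $c := \alpha\kappa^2 - q\kappa + V \geq 0$ and $B := q - 2\kappa\alpha \leq -2(\inf\alpha)\sqrt{a^2+b} < 0$, and the theorem becomes $\varlimsup|\tilde u| \geq |\tilde u(x_0)|$. Your concavity observation on regions where $\tilde u > 0$, $\tilde u' \leq 0$ is also correct, and it does force a zero crossing once $\tilde u'$ becomes strictly negative. Up to this point you are essentially reproving, in conjugated coordinates, the easy half of the paper's argument.

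The genuine gap is exactly where you place it, and neither of your two proposed fixes closes it. After $\tilde u$ crosses zero you need a quantitative overshoot (``bounce''): the magnitude $|\tilde u|$ must come back above the value it had at the last maximum. Concavity alone gives no lower bound on how far below zero $\tilde u$ goes, and in the region $\tilde u < 0$, $\tilde u' < 0$ the two forcing terms $-B\tilde u'/\alpha < 0$ and $-c\tilde u/\alpha \geq 0$ compete, so the concavity/convexity picture does not iterate cleanly. Your energy candidate $E = \mu\big[(\tilde u')^2 + \gamma\,\tilde u^2\big]$ with $\mu = \exp(\int B/\alpha)$ and $\gamma = c/\alpha$ has derivative $E' = -(B/\alpha)\mu(\tilde u')^2 + (\mu\gamma)'\tilde u^2$; the first term is favorable, but $(\mu\gamma)'$ involves $\gamma' = (c/\alpha)'$, which has no sign and indeed is not even defined pointwise for $L^\infty$ coefficients. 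Any variant with constant $\gamma_0$ introduces a cross term $2(\gamma_0 - c/\alpha)\,\tilde u\,\tilde u'$ of indeterminate sign. So the energy route fails for genuinely variable coefficients. The barrier alternative you mention (a combination of $e^{\pm\kappa x}$ with a Hopf lemma at $x_0$) is not enough either: a Hopf-type statement gives strict sign of a derivative at an endpoint, not the overshoot in amplitude that you need to contradict $\varlimsup|\tilde u| < |\tilde u(x_0)|$.

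This missing step is precisely what the paper's bouncing lemma (Lemma~\ref{bouncing}, together with Lemma~\ref{lem:L*}) supplies, and it is the technical heart of the proof. The paper handles it by replacing the variable-coefficient equation with the constant-coefficient extremal operators $\L_* u = u'' - \beta|u'| - \gamma|u|$ and $\L^* u = u'' + \beta|u'| + \gamma|u|$ (with $\beta = \sup|q|/\alpha$, $\gamma = \sup|V|/\alpha$), proving a ratio maximum principle (Proposition~\ref{pro:MP}) for them, and then comparing $u$ to explicit solutions of the associated linear constant-coefficient ODEs $v'' + \beta v' \mp \gamma v = 0$; the quantitative overshoot $-u(\bar x + h) > u(\bar x)e^{-\kappa h}$ is extracted by a case analysis on $\beta^2$ versus $4\gamma$. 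To complete your proof you would need an equivalent of this: some device that dominates $\tilde u$ after each zero crossing by a constant-coefficient object whose amplitude is explicitly controlled, rather than a soft monotonicity argument.

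One smaller issue: after locating $\bar x$ where $\max_{[x_0,\infty)}\tilde u$ is attained, the claim that ``$\tilde u' \leq 0$ on a right-neighborhood'' is not automatic from $\tilde u \in W^{2,\infty}_{loc}$ at a mere maximum point, and is in fact entangled with the concavity you want to establish; this needs to be argued more carefully (the paper sidesteps it by working through the ratio maximum principle rather than pointwise calculus at the max).
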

As a consequence, the \UCI\ holds when $N=1$ with the rate of decay~\eqref{k1}. 
Let us make some comments about this rate of decay.
If the coefficients $\alpha,q,V$ are constant with $q\geq0$ and $V\leq0$
then $\kappa$ in \eqref{k1} is precisely the rate of decay of  
solutions at~$+\infty$. Hence, our result is sharp in that case.
The fact that we are able to obtain the equality in \eqref{k1}
instead of the strict inequality `$>$' is actually surprising for us.
As explained before, this threshold rate of decay cannot be obtained in higher dimension. 
We also derive a result in the spirit of the quantitative estimate of 
\cite[Theorem 1.2]{KSW}, which implies that 
the inequality $|u(x)|>e^{-\kappa'x}$, for any $\kappa'$ larger than $\kappa$ in \eqref{k1}, 
holds in a relatively dense set (see Proposition \ref{dense} below).
We recall that the lower bound in \cite{KSW} is $e^{-\kappa|x|\log(|x|)}$.

\thm{1} can be readily extended to {\em radial solutions} (i.e., 
of the form $u(x)=\phi(|x|)$) for general elliptic equations in higher dimension.
\begin{corollary}\label{cor:urad}
	Let $u$ be a nontrivial, radial solution of $\L u=0$ in an exterior domain~$\O$.
	Then, 
	$$\varlimsup_{|x|\to+\infty}|u(x)|e^{\kappa|x|}=+\infty,$$
	for all $\kappa$ satisfying
	$$\kappa>\varlimsup_{|x|\to+\infty}\frac{|q|}{2\alpha}+\sqrt{\varlimsup_{|x|\to+\infty}
		\frac{|q|^2}{4\alpha^2}
		+\varlimsup_{|x|\to+\infty}\frac{|V|}\alpha}.$$
\end{corollary}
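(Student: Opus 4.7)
The plan is to reduce the claim to \thm{1} via the restriction of the radial solution to a ray. Write $u(x)=\phi(|x|)$; since $\O$ has compact complement, there is $R>0$ with $\{|x|>R\}\subset\O$ and $\phi$ defined on $(R,+\infty)$. For any direction $\omega\in\Sph$ and $r>R$, substituting the radial form into $\L u=0$ at $x=r\omega$ and using the standard formulas $\partial_i u=\phi'(r)x_i/r$ and $\partial_{ij}u=\phi''(r)x_ix_j/r^2+\phi'(r)(\delta_{ij}/r-x_ix_j/r^3)$ yields the one-dimensional equation
$$\tilde\alpha_\omega(r)\phi''(r)+\tilde q_\omega(r)\phi'(r)+V(r\omega)\phi(r)=0,\qquad r>R,$$
with $\tilde\alpha_\omega(r):=A(r\omega)\omega\cdot\omega$ and $\tilde q_\omega(r):=q(r\omega)\cdot\omega+r^{-1}\bigl(\Tr A(r\omega)-A(r\omega)\omega\cdot\omega\bigr)$. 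Fix any $\omega$ for which the restrictions $r\mapsto A(r\omega),\ q(r\omega),\ V(r\omega)$ are measurable (a.e.\ $\omega$ by Fubini), so the above is a valid 1D elliptic equation with $L^\infty$ coefficients and a positive leading coefficient.

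Next, fix $\kappa$ satisfying the hypothesis, and denote by $\kappa_{\omega,R_0}$ the quantity defined as in \eqref{k1} for the above equation restricted to $(R_0,+\infty)$. Using $\tilde\alpha_\omega\geq\alpha(r\omega)$, $|q(r\omega)\cdot\omega|\leq|q(r\omega)|$, and the boundedness of $A$, the extra term in $\tilde q_\omega$ is $O(1/r)$, so
$$\limsup_{R_0\to+\infty}\kappa_{\omega,R_0}\leq\varlimsup_{|x|\to+\infty}\frac{|q|}{2\alpha}+\sqrt{\varlimsup_{|x|\to+\infty}\frac{|q|^2}{4\alpha^2}+\varlimsup_{|x|\to+\infty}\frac{|V|}{\alpha}}<\kappa.$$
Thus for $R_0$ large enough we can pick $\kappa'\in(\kappa_{\omega,R_0},\kappa)$.

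Applying \thm{1} (translated to the half-line $(R_0,+\infty)$) to $\phi$ gives $\varlimsup_{r\to+\infty}|\phi(r)|e^{\kappa'r}\geq|\phi(r_0)|e^{\kappa'r_0}$ for every $r_0>R_0$. Non-triviality of $u$ means $\phi\not\equiv0$ on $(R,+\infty)$; by ODE uniqueness for the 1D equation with bounded measurable coefficients (a direct Gronwall argument applied to $\phi''=-(\tilde q_\omega/\tilde\alpha_\omega)\phi'-(V(r\omega)/\tilde\alpha_\omega)\phi$ at a common zero of $\phi$ and $\phi'$), the function $\phi$ cannot vanish on any open interval, so we can pick $r_0>R_0$ with $\phi(r_0)\neq0$. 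The $\varlimsup$ above is then strictly positive, and since $\kappa>\kappa'$, writing $|\phi(r)|e^{\kappa r}=|\phi(r)|e^{\kappa'r}e^{(\kappa-\kappa')r}$ forces $\varlimsup_{r\to+\infty}|\phi(r)|e^{\kappa r}=+\infty$, which is the corollary.

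The main obstacle is the bookkeeping around the extra $O(1/r)$ drift term produced by the radial reduction: this term would spoil the sharp constant in a literal transcription of \thm{1}, but it is harmless here because the hypothesis uses $\varlimsup$s rather than $\sup$s, giving us room to truncate to $(R_0,+\infty)$ with $R_0$ large. A secondary subtlety is the non-vanishing of $\phi$ near infinity, which is handled by the standard 1D ODE uniqueness argument for strong solutions.
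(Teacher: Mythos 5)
Your proof is correct and follows essentially the same route as the paper's: reduce to the one-dimensional setting by restricting the radial solution to a ray, absorb the extra $O(1/r)$ drift produced by the radial Laplacian into the first-order coefficient, use the $\varlimsup$ hypothesis to truncate to $(R_0,+\infty)$ with $R_0$ large so that the truncated constant $\kappa_{\omega,R_0}$ is strictly below $\kappa$, and then invoke Theorem~\ref{thm:1} together with one-dimensional unique continuation for the ODE. The paper phrases the argument by contradiction (assume $\phi\prec e^{-\kappa r}$, deduce $\phi\equiv0$ past some $R$, then propagate the vanishing down to the inner radius via Carath\'eodory/ODE unique continuation, contradicting nontriviality), whereas you run it forward (use ODE uniqueness to find $r_0>R_0$ with $\phi(r_0)\neq0$, then apply Theorem~\ref{thm:1} directly and blow up the $\varlimsup$ by replacing the exponent $\kappa_{\omega,R_0}$ with the strictly larger $\kappa$). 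The two formulations are logically equivalent and both rely on the same ODE-level unique continuation.

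Two minor points worth tightening. First, in the line "Applying Theorem~\ref{thm:1}\,\ldots gives $\varlimsup_{r\to+\infty}|\phi(r)|e^{\kappa'r}\geq|\phi(r_0)|e^{\kappa'r_0}$," the theorem literally yields the bound with exponent $\kappa_{\omega,R_0}$, not with $\kappa'>\kappa_{\omega,R_0}$; the cleaner chain is to take $r_0$ with $\phi(r_0)\neq0$, get $\varlimsup_{r\to\infty}|\phi(r)|e^{\kappa_{\omega,R_0}r}>0$ from the theorem, and then multiply by $e^{(\kappa-\kappa_{\omega,R_0})r}\to+\infty$ to conclude, which in fact removes the need to introduce the intermediate $\kappa'$ at all. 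Second, the assertion "non-triviality of $u$ means $\phi\not\equiv0$ on $(R,+\infty)$" is itself not automatic when $\O$ strictly contains $\{|x|>R\}$, and it in fact already uses the ODE unique continuation you invoke two clauses later (to rule out $\phi$ vanishing on an outer annulus while $u$ lives on an inner piece of $\O$); the paper glosses over the same point with the phrase "by the arbitrariness of $\hat R$." Neither issue is a genuine gap, but both deserve a sentence.
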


Next, we extend the \UCI\ property to {\em radial operators} in arbitrary dimension.
This is achieved by applying our one-dimensional result to the spherical harmonic decomposition of the solution. 
This idea of considering the harmonic decomposition of the solution 
is not new in the context of the unique continuation property, see~\cite{Mu54}.

\begin{theorem}\label{thm:Lrad}
Assume that $\L$ is of the form
$$\L u=\Delta u+q(|x|)\frac x{|x|}\.\nabla u+V(|x|) u.$$
Let $u$ be a nontrivial solution of $\L u=0$ in an exterior domain~$\O$.
Then, 
$$\varlimsup_{|x|\to+\infty}|u(x)|e^{\kappa|x|}=+\infty,$$
for all $\kappa$ satisfying
$$\kappa>\varlimsup_{r\to+\infty}\frac{|q|}{2}+\sqrt{\varlimsup_{r\to+\infty}
	\frac{|q|^2}{4}
	+\varlimsup_{r\to+\infty}|V|}.$$
\end{theorem}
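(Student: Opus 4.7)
My plan is to reduce Theorem~\ref{thm:Lrad} to the one-dimensional asymptotic result of Corollary~\ref{cor:urad} by expanding the solution in spherical harmonics. Let $R_0>0$ be so large that $\{|x|\ge R_0\}\subset\O$, and fix an orthonormal basis $\{Y_k\}_{k\ge 0}$ of $L^2(\Sph)$ of spherical harmonics, say $-\Delta_{\Sph}Y_k=\mu_k Y_k$. For $r>R_0$ I would define the Fourier coefficients
$$u_k(r):=\int_{\Sph}u(r\sigma)Y_k(\sigma)\,d\sigma.$$
Since $A=I$ and elliptic regularity yields $u\in W^{2,p}_{loc}(\O)$ for every $p<+\infty$, each $u_k$ inherits the regularity $W^{2,p}_{loc}((R_0,+\infty))$. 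Writing the Laplacian in polar coordinates and integrating the identity $\L u=0$ against $Y_k$ on each sphere, I would verify that
$$u_k''(r)+\Bigl(\frac{N-1}{r}+q(r)\Bigr)u_k'(r)+\Bigl(V(r)-\frac{\mu_k}{r^2}\Bigr)u_k(r)=0\aein(R_0,+\infty).$$

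The second step is to locate a nontrivial coefficient and reduce to one dimension. Because $A=I$ is smooth, $\L$ enjoys the standard unique continuation property, so a nontrivial~$u$ on $\O$ cannot vanish identically on $\{|x|>R_0\}$; by Parseval at least one $u_k$ is therefore not identically zero on $(R_0,+\infty)$, and I fix such a $k$. To this $u_k$ I would apply Corollary~\ref{cor:urad} with ``$\alpha\equiv 1$'', drift $\frac{N-1}{r}+q(r)$, and potential $V(r)-\frac{\mu_k}{r^2}$. The summands $\frac{N-1}{r}$ and $\frac{\mu_k}{r^2}$ vanish at infinity and therefore drop out of all three $\varlimsup$s in the threshold formula; consequently the bound produced by Corollary~\ref{cor:urad} for~$u_k$ coincides with the value $\kappa_0$ appearing in the statement of Theorem~\ref{thm:Lrad}. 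I would deduce that, for every $\kappa>\kappa_0$,
$$\varlimsup_{r\to+\infty}|u_k(r)|e^{\kappa r}=+\infty.$$

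To conclude, I would lift this blow-up back to~$u$ by contradiction: if $\varlimsup_{|x|\to+\infty}|u(x)|e^{\kappa|x|}$ were finite, there would exist $C,R>0$ with $|u(x)|\le Ce^{-\kappa|x|}$ for $|x|\ge R$, and then
$$|u_k(r)|\le \|Y_k\|_{L^1(\Sph)}\,\max_{|x|=r}|u(x)|\le C\|Y_k\|_{L^1(\Sph)}\,e^{-\kappa r}\fall r\ge R,$$
forcing $\varlimsup_{r\to+\infty}|u_k(r)|e^{\kappa r}<+\infty$, in contradiction with the previous display.

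The step I expect to be the main technical nuisance is the rigorous derivation of the a.e.~pointwise ODE for $u_k$: since $u$ is only in $W^{2,p}_{loc}$, one has to justify differentiation under the integral sign in $r$ and the spherical integration by parts that produces the $-\mu_k/r^2$ term (likely via a mollification argument, exploiting that the radial coefficients make the tangential integration by parts painless). Once this is in place, the remainder of the argument is bookkeeping of $\varlimsup$'s, the invocation of unique continuation, and the elementary $L^\infty$--$L^1$ comparison between $u$ and its spherical Fourier coefficients.
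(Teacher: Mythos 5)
Your proposal follows essentially the same route as the paper: decompose $u$ into spherical harmonics, derive for each coefficient $u_k$ a one-dimensional ODE whose lower-order terms differ from $q,V$ only by the $O(1/r)$ and $O(1/r^2)$ geometric corrections, and invoke the one-dimensional decay result together with unique continuation. The paper organizes the contradiction in the opposite order (it first shows every $u_j\equiv 0$ and then applies unique continuation for $\L$ to conclude $u\equiv 0$, whereas you first use unique continuation to locate a nontrivial $u_k$), but the two arrangements are logically interchangeable.

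One small correction: Corollary~\ref{cor:urad} as stated concerns radial solutions of an $N$-dimensional equation, not functions of a single variable, so it cannot be cited verbatim for $u_k$. What you actually need is Theorem~\ref{thm:1} applied on a far-out half-line $(R,+\infty)$ (where the $\sup$ of the coefficients of the $u_k$-ODE, including the $(N-1)/r$ and $\mu_k/r^2$ corrections, drops below your $\kappa$), followed by unique continuation for the ODE to propagate $u_k\equiv 0$ back to $(R_0,+\infty)$ — exactly the two ingredients in the proof of Corollary~\ref{cor:urad}, which is presumably why you reached for it. With that citation replaced, and the distributional derivation of the $u_k$-ODE carried out as you indicate (the paper tests against $\psi(|x|)\vp_j(x/|x|)$ with $\psi\in C_c^\infty$, which achieves the same end as your proposed mollification), the argument is complete.
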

This theorem implies that the Landis conjecture holds for radial potentials $V$.
It~also entails the result in the case of constant coefficients $A,q,V$,
	by a simple change of coordinate system which transforms $A$ into the identity matrix 
	and then multiplying the solution by a suitable exponential in order to absorb 
	the drift term.

We then focus on solutions with a given sign. This makes the problem much simpler,
because the sign condition allows one to directly
use some comparison arguments in order to control the decay of the solution.
One of the consequences of this is that
the result applies to supersolutions.
\begin{theorem}\label{thm:u>0}
	Let $u$ be a positive supersolution of $\L u=0$ in an exterior domain~$\O$.
	Then, 
	$$u(x)\succ e^{-\kappa|x|},$$
	for all $\kappa$ satisfying
	\Fi{k2}
	\kappa>\varlimsup_{|x|\to\infty}
	\left(\frac{|q|}{2\alpha}+\sqrt{\frac{|q|^2}{4\alpha^2}
		+\frac{|V|}\alpha}\right).
	\Ff
\end{theorem}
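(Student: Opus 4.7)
The plan is to compare $u$ from below with a radial barrier $w(x)=e^{-\kappa'|x|}$ for a suitable $\kappa'$ strictly between the $\varlimsup$ appearing in \eqref{k2} and the target $\kappa$; the strict positivity of $u$ will supply the maximum principle that makes the comparison work, even though the operator $\mathcal{L}$ has a potential $V$ of arbitrary sign.

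First I would fix
$$\kappa'\in\left(\,\varlimsup_{|x|\to\infty}\left(\frac{|q|}{2\alpha}+\sqrt{\frac{|q|^2}{4\alpha^2}+\frac{|V|}\alpha}\right),\ \kappa\,\right).$$
By the choice of $\kappa'$ there exist $\delta>0$ and $R_2$ large (with $\partial\Omega\subset B_{R_2}$) such that $\alpha(x)\kappa'^2-|q(x)|\kappa'-|V(x)|\geq 2\delta$ for every $|x|>R_2$. For $w(x):=e^{-\kappa'|x|}$ a direct computation yields
$$\mathcal{L}w=w\left[\kappa'^2\frac{A(x)x\.x}{|x|^2}-\kappa'\frac{q(x)\.x}{|x|}+V(x)-\kappa'\frac{\Tr A(x)-A(x)x\.x/|x|^2}{|x|}\right],$$
and since $A(x)x\.x/|x|^2\geq\alpha(x)$ the bracket is at least $2\delta$ plus an $O(1/|x|)$ correction, so enlarging $R_2$ absorbs this correction and gives $\mathcal{L}w\geq\delta w$ on $\{|x|>R_2\}$, that is, $w$ is a strict subsolution. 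By continuity and positivity of $u$ on the compact sphere $\partial B_{R_2}$, I pick $c>0$ with $cw\leq u$ on $\partial B_{R_2}$, and set $\psi:=u-cw$. Then $\mathcal{L}\psi=\mathcal{L}u-c\mathcal{L}w\leq -c\delta w<0$, $\psi\geq 0$ on $\partial B_{R_2}$, and, crucially, because $w\to 0$ at infinity while $u>0$, $\liminf_{|x|\to\infty}\psi\geq\liminf u\geq 0$.

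The heart of the proof is to conclude that $\psi\geq 0$ on all of $\{|x|>R_2\}$. I would argue by contradiction: suppose $\psi(x_0)<0$, pick $\e\in(0,-\psi(x_0))$, and let
$$D_\e:=\{x\in\R^N:|x|>R_2\text{ and }\psi(x)<-\e\}.$$
The liminf condition makes $D_\e$ bounded, and the fact that $\psi\geq 0$ on $\partial B_{R_2}$ keeps $\overline{D_\e}$ away from $\partial B_{R_2}$, so $\psi\equiv-\e$ on $\partial D_\e$. The key point is now that $u$ is a strictly positive continuous supersolution of $\mathcal{L}$ on the bounded open set $D_\e\subset\Omega$; since $u>0$ on $\overline{D_\e}$, $u$ cannot be a Dirichlet eigenfunction, so by the Berestycki--Nirenberg--Varadhan characterisation the principal eigenvalue $\lambda_1(-\mathcal{L},D_\e)$ is strictly positive. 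Consequently $\mathcal{L}$ satisfies the weak maximum principle on $D_\e$; applied to the supersolution $\psi$ with constant boundary datum $-\e$, it forces $\psi\geq-\e$ in $D_\e$, contradicting the very definition of $D_\e$.

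Once $u\geq c\,e^{-\kappa'|x|}$ is established on $\{|x|>R_2\}$, the theorem follows at once from $u(x)e^{\kappa|x|}\geq c\,e^{(\kappa-\kappa')|x|}\to+\infty$. I expect the main obstacle to be the comparison at infinity, since $\mathcal{L}$ generally does not satisfy a maximum principle on unbounded domains (nor even on bounded ones, because $V$ has no sign). The device of truncating to the sublevel set $D_\e=\{\psi<-\e\}$, which is automatically bounded thanks to the liminf condition, is exactly what reduces the whole question to the bounded-domain statement, where the mere existence of the positive supersolution $u$ forces $\lambda_1>0$ via BNV.
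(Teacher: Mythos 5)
The approach is genuinely different from the paper's: you attempt a purely elliptic barrier argument with $w(x)=e^{-\kappa'|x|}$, whereas the paper embeds the problem in a parabolic setting and compares with an ancient subsolution $\eta(x,t)=\chi(|x|-R,\delta t+\delta^{-1}+h)$, $\chi(r,s)=(1-r/s)^{\kappa s}$, which is \emph{compactly supported in $x$ at each time}, then sends $\delta\to0$ to recover the exponential. Your barrier computation and the observation that a positive supersolution $u$ on $\overline{D_\e}\subset\subset\O$ forces $\lambda_1(-\L,D_\e)>0$ (e.g.\ via strict domain monotonicity) are fine. The gap is in the final comparison.

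You invoke the maximum principle on $D_\e$ for the supersolution $\psi=u-cw$ with boundary datum $-\e$ and conclude $\psi\ge-\e$. This does not follow. The maximum principle associated with $\lambda_1(-\L,D_\e)>0$ compares a supersolution of $\L$ with a \emph{subsolution} (or with a solution), not with an arbitrary constant: one would need $\L(\psi+\e)\le0$ in $D_\e$, i.e.\ $\e V\le -\L\psi$, to conclude $\psi+\e\ge0$. You only have $-\L\psi\ge c\delta w$, and $w=e^{-\kappa'|x|}$ can be arbitrarily small on $D_\e$ while $V$ is merely bounded and may be positive; so $\e V\le c\delta w$ fails for $|x|$ large, and the constant $-\e$ is not a subsolution of $\L$ unless $V\le0$ (which the theorem does not assume). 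A concrete obstruction: on a ball $B$ where $\lambda_1(-\Delta-c,B)>0$ with $0<c$, the function $-\e-t\varphi_1$ ($\varphi_1$ the Dirichlet principal eigenfunction of $-\Delta$, $t>0$ small) is a strict supersolution of $\Delta+c$, equals $-\e$ on $\partial B$, yet is $<-\e$ inside. So the step ``MP forces $\psi\ge-\e$'' is incorrect as stated, and this is exactly the point where the unboundedness of the comparison region (which you tried to circumvent by the level set $D_\e$) reasserts itself.

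The fix the paper uses, and which can also be phrased purely elliptically, is to replace $e^{-\kappa'|x|}$ by a \emph{compactly supported} subsolution: take $\eta(x):=\chi(|x|-R,S)=\bigl(1-\tfrac{|x|-R}{S}\bigr)^{\kappa S}$ for $R<|x|<R+S$ and $\eta\equiv0$ for $|x|\ge R+S$. A computation parallel to Step~1 of the paper's proof of Theorem~\ref{thm:ancient} (with the $\delta\,\partial_s\chi$ term absent) shows $\L\eta\ge0$ on the annulus $\{R<|x|<R+S\}$ once $R,S$ are large enough, using precisely the inequality $Q_{x}(\kappa)\ge\alpha\e^2$. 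Then $u-c\eta$ is a supersolution on the bounded annulus, nonnegative on \emph{both} boundary spheres (because $\eta$ vanishes on the outer one), and your $\lambda_1>0$ argument applies there to give $u\ge c\eta$; letting $S\to\infty$ yields $u(x)\ge c\,e^{-\kappa(|x|-R)}$. The key that your barrier misses is this compact support, which is exactly what makes the comparison at infinity vacuous.
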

Actually, the above result is derived in Section \ref{sec:parabolic}
in the more general framework of 
ancient supersolutions of parabolic equations.
We remark that \thm{u>0} provides a lower bound on the decay of $u(x)$
as $|x|\to\infty$, whereas our previous estimates
only hold along some diverging sequences, which is natural because
the functions there are allowed to change sign.
An analogous result to \thm{u>0}
is derived in \cite[Corollary~4.1]{ABG}, using a completely
different method based on the stochastic representation of solutions, 
but with the following threshold for the rate of decay:
$$
\kappa>
\varlimsup_{|x|\to\infty}\frac{|q|}{\alpha}+
\varlimsup_{|x|\to\infty}\sqrt{\frac{|V|}\alpha}.
$$
This threshold is larger than or equal to the one in \eqref{k2}, 
and it is not sharp, even for operators with constant coefficients, if $q\not\equiv0$.
Furthermore, the threshold in \eqref{k2} is
expressed in terms of the $\varlimsup$ of the combination
of $q,V,\alpha$, which is in general smaller than the 
combination of their $\varlimsup$.

Finally, as in \cite{ABG}, we extend the result to sign-changing solutions
under the assumption that the generalised
\pe\ $\l$ is nonnegative. The latter is defined as follows:
\Fi{l1}
\l:=\sup\{\lambda\ :\ \exists\vp>0,\ (\L+\lambda)\vp\leq0\text{
	in }\O\},
\Ff
or it can be equivalently defined as the limit as $r\to+\infty$
of the classical \pe\ in $\O\cap B_r$ under Dirichlet boundary condition
if $\O$ is smooth (see, e.g., \cite{Furusho1,Agmon}, or the more recent paper \cite{BR4}).
Clearly, the hypothesis of \thm{u>0} yields $\l\geq0$,
and so does condition $V\leq0$.
\begin{theorem}\label{thm:l1}
	Let $u$ be a nontrivial supersolution of $\L u=0$ 
	in an exterior domain~$\O$. Assume that $\l\geq0$ and that either $\O=\R^N$ or that
	$$\varliminf_{x\to\partial\O}u(x)\geq0.$$
	Then,
	$$\varlimsup_{|x|\to+\infty}|u(x)|e^{\kappa|x|}=+\infty,$$
	for all $\kappa$ satisfying \eqref{k2}.
\end{theorem}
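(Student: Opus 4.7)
The plan is to reduce the sign-changing case to \thm{u>0} via the classical trick of dividing~$u$ by a positive principal eigenfunction. Fix $\kappa$ satisfying \eqref{k2}; by the strict inequality in \eqref{k2} I can pick $\kappa_0<\kappa$ still satisfying \eqref{k2}. It suffices to rule out the existence of a nontrivial $u$ as in the statement with $u\prec e^{-\kappa_0|x|}$, since the negation of this property forces $\varlimsup_{|x|\to+\infty}|u(x)|e^{\kappa|x|}=+\infty$ (use a sequence $|x_n|\to\infty$ along which $|u(x_n)|e^{\kappa_0|x_n|}$ stays bounded away from~$0$, and multiply by $e^{(\kappa-\kappa_0)|x_n|}$).

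I would first invoke the references cited after \eqref{l1} (Agmon, \cite{BR4}) to produce a generalised principal eigenfunction $\varphi>0$ satisfying $(\L+\l)\varphi=0$ in~$\O$. As $\l\geq0$, $\varphi$ is a positive supersolution of $\L$ in~$\O$, so \thm{u>0} delivers $\varphi\succ e^{-\kappa_0|x|}$. Together with the contradiction hypothesis $u\prec e^{-\kappa_0|x|}$, this shows that $w:=u/\varphi$ is well defined and continuous in the interior of~$\O$, satisfies $w(x)\to0$ as $|x|\to\infty$, and obeys $\varliminf_{x\to\partial\O}w\geq0$ (vacuously when $\O=\R^N$, and otherwise from the positivity of~$\varphi$ on the compact set $\partial\O$).

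Expanding $\L(w\varphi)=0$ and using $\L\varphi=-\l\varphi$ yields the equation
$$\Tr\bigl(A D^2 w\bigr)+\Bigl(q+\frac{2 A D\varphi}{\varphi}\Bigr)\.Dw-\l w=0\inn \O,$$
a uniformly elliptic equation on compact subsets of~$\O$, whose zero-order coefficient $-\l$ is non-positive. I claim that $w\geq0$ in~$\O$. Otherwise, combining the boundary bound and the decay at infinity, $w$ attains a strictly negative infimum at an interior point~$x^*$. When $\l>0$, the pointwise information $Dw(x^*)=0$ and $D^2w(x^*)\geq0$ gives $\Tr(AD^2w)(x^*)\geq0$, so the equation forces $\l w(x^*)\geq0$, contradicting $w(x^*)<0$. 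When $\l=0$, the equation has no zero-order term, and the strong maximum principle yields $w$ constant, hence (by $w\to0$ at infinity) identically zero, again contradicting $w(x^*)<0$.

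Thus $u=w\varphi\geq0$ in~$\O$. Since $u$ is nontrivial, the strong maximum principle applied through the same transformed equation for~$w$ (which has non-positive zero-order coefficient) gives $u>0$ throughout~$\O$. Applying \thm{u>0} to $u$ then produces $u\succ e^{-\kappa_0|x|}$, in direct contradiction with the standing assumption. The delicate step is the pointwise maximum principle for~$w$: since $w$ only lies in $W^{2,N}_{loc}$, the argument at $x^*$ should be phrased in the strong (Aleksandrov--Bakelman--Pucci) sense, and one must carefully combine the decay at infinity with the boundary control to ensure the infimum of~$w$ is actually attained at an interior point of~$\O$.
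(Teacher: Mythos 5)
Your argument works cleanly in the case $\O=\R^N$, where it is essentially the paper's argument rephrased: instead of maximizing $|u|/\vp$ and applying the strong maximum principle to $C\vp\pm u$, you transform the equation into one for $w=u/\vp$ with zero-order coefficient $-\l\leq0$ and apply the SMP to $w$. (One simplification: both cases $\l>0$ and $\l=0$ can be handled by the strong maximum principle for supersolutions attaining a nonpositive interior minimum; your separate pointwise ABP-type argument for $\l>0$ is unnecessary.)

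In the exterior-domain case there is a genuine gap, at the step ``$\varliminf_{x\to\partial\O}w\geq0$ from the positivity of $\vp$ on the compact set $\partial\O$.'' The principal eigenfunction $\vp$ is positive in the open set $\O$, but it is \emph{not} bounded away from zero near $\partial\O$; in fact the paper explicitly remarks, after citing \cite[Theorem~1.4]{BR4}, that $\vp=0$ on $\partial\O$ when $\O$ is smooth. For an irregular $\O$, $\vp$ need not even extend continuously to the boundary. Since the hypothesis on $u$ is only $\varliminf_{x\to\partial\O}u(x)\geq0$ (so $u$ may still take small negative values near $\partial\O$), the ratio $w=u/\vp$ may well satisfy $\varliminf_{x\to\partial\O}w=-\infty$, and your maximum-principle argument then has no way to rule out a negative interior infimum. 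This is precisely the difficulty the paper addresses by an entirely different mechanism: it first establishes the comparison $C\vp\geq u$ in a compactly contained subdomain $\O_{\bar n}$ by compactness, then propagates it to the thin boundary strip $\O\setminus\ol\O_{\bar n}$ via the Berestycki--Nirenberg--Varadhan maximum principle in domains of small measure, which is applicable without any boundary regularity. Your proposal does not contain this ingredient, so as written it proves the theorem only for $\O=\R^N$.

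Two smaller remarks. First, you flag the regularity of $w$ for the pointwise test at $x^*$ as the delicate point; that is indeed something to handle (e.g.\ via the ABP maximum principle for strong solutions), but it is a technicality, not the crux. Second, the paper's exterior-domain argument uses only that $u$ is a subsolution; your transformed-equation route treats $u$ symmetrically with $-u$, which is fine here but loses that generality.
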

We point out that \cite{ABG} only covers the case $\O=\R^N$ 
(with a larger threshold for~$\kappa$).
Here, in the case of an exterior domain, we do not assume any regularity of the boundary,
but we need to impose a sign for the solution there.
In order to deal with the lack of regularity of the domain, 
we make use of the maximum principle in small domains
derived by Berestycki, Nirenberg and Varadhan in \cite{BNV}, building on an idea of Bakelman. 
The result of \cite{BNV} actually provides a `refined' maximum principle, in which the boundary 
condition is understood in a suitable weak sense. We believe this should allow one 
to relax the boundary condition in our \thm{l1} too.


The following table summarises all the cases in which we 
derive the \UCI\ property, with the corresponding values of the rate of decay $\kappa$.

\setlength{\tabcolsep}{15pt}

\begin{table}[H]\small
  \caption{{\em Validity of the }\UCI }
\begin{tabular}{ l | l }
    	
	\noalign{\vspace{-7pt}}\hline\noalign{\medskip}
		
	$N=1$ &	
	$\displaystyle
	\kappa=\sup\frac{|q|}{2\alpha}+\sqrt{\sup\frac{|q|^2}{4\alpha^2}
	+\sup\frac{|V|}\alpha}$	
	\\ 
	\noalign{\medskip}\hline\noalign{\medskip}
	
	$u$ is radial,&	 
	\multirow{3}{*}
	{$\displaystyle
	\kappa>\varlimsup_{|x|\to\infty}\frac{|q|}{2\alpha}+\sqrt{\varlimsup_{|x|\to\infty}
	\frac{|q|^2}{4\alpha^2}
	+\varlimsup_{|x|\to\infty}\frac{|V|}\alpha}$}
	\\
	or $\L$ is radial, & \\
	\vspace{2pt}or $\L$ has constant coefficients & \\
	\noalign{\medskip}\hline\noalign{\medskip}
	

	$u\geq0$, & 
	\multirow{3}{*}
	{$\displaystyle
	\kappa>\varlimsup_{|x|\to\infty}
	\left(\frac{|q|}{2\alpha}+\sqrt{\frac{|q|^2}{4\alpha^2}
	+\frac{|V|}\alpha}\right)$}
	\\ 
	or $\O=\R^N$ and $\l\geq0$, & \\
	or $\displaystyle\varliminf_{x\to\partial\O}u(x)\geq0$ and $\l\geq0$ & \\
	\noalign{\medskip}\hline	
\end{tabular}
\label{tab:kappa}
\end{table}




\bigskip
\section{The one-dimensional case}

In this section, $N=1$ and the operator $\L$ is defined in the half-line $\R^+\!=(0,+\infty)$~by 
$$
\L u=\alpha (x)u''+q(x)u'+V(x)u.
$$
We assume that $\alpha,q,V\in L^\infty(\R^+)$ and that $\inf \alpha>0$. 
We let $\beta,\gamma$ denote the following quantities:
$$\beta:=\sup_{\R^+}\frac{|q|}{\alpha},\qquad \gamma:=\sup_{\R^+}\frac{|V|}{\alpha}.$$

The strategy we employ to prove the \UCI\ property relies on the comparison 
with suitable solutions for 
the following nonlinear operators with constant coefficients:
$$
\L_* u:= u''-\beta|u'|-\gamma|u|,
$$
$$
\L^* u:= u''+\beta|u'|+\gamma|u|.
$$
These are the ``extremal'' operators associated with $\L$, 
in the sense that 
$$\L_*\leq\L\leq\L^*,$$ that is,
solutions for $\L$ are supersolutions for $\L_*$ and subsolutions for $\L^*$.
As a matter of fact, we will actually deal with functions $u$ satisfying more generally
$\L_* u\leq0\leq\L^* u$ rather than $\L u=0$.
Concerning the regularity, we have that if $u\in W^{2,1}_{loc}(\R^+)$ solves
$\L u=0$, then $u'\in C(\R^+)$ and therefore, using the equation, we find 
that $u\in W^{2,\infty}_{loc}(\R^+)$. Thus, we work in this regularity framework.

Positive, decreasing solutions of $\L_*=0$ and negative, increasing solutions of $\L^*=0$ satisfy
$u''+\beta u'-\gamma u=0$. They decay at $+\infty$ as 
$e^{-\kappa x}$, with $\kappa$ given by \eqref{k1}, i.e.,
\Fi{k1D}
\kappa=\frac{\beta}2+\sqrt{\frac{\beta^2}4+\gamma}.
\Ff
Our aim is to show that the same $\kappa$ provides a lower bound for the exponential rate of decay
(along some sequence) for sign-changing functions satisfying $\L_* u\leq0\leq\L^* u$.
Throughout this section, $\kappa$ denotes the above quantity.

The comparison principle between sub and supersolutions for the extremal operators 
requires the positivity of the supersolution.
This condition implies that the generalised \pe\ of the nonlinear operator has the sign
that ensures the validity of the \MP.
We further require that the derivatives of the functions do not vanish simultaneously, in order
to reduce to the linear case.
\begin{proposition}\label{pro:MP}
	Let $(a,b)$ be a bounded interval and let $u_1,u_2\in W^{2,\infty}((a,b))$ satisfy
	$$\max_{[a,b]} u_1>0,\qquad\min_{[a,b]} u_2>0,\qquad |u_1'|+|u_2'|\neq0 \inn(a,b),$$
	and either
	$$\L_* u_2\leq0\leq\L_* u_1\qquad\text{or}\qquad
	\L^* u_2\leq0\leq\L^* u_1 \ \inn(a,b).$$
	Then
	$$\max_{[a,b]}\frac{u_1}{u_2}=\max\left\{\frac{u_1(a)}{u_2(a)},\frac{u_1(b)}{u_2(b)}\right\}.$$
	Moreover, unless $u_1/u_2$ is constant, the above maximum cannot be attained at some interior point
	and in addition if it is attained at $y$ (resp.~$z$), there holds
	$$\frac{u_1'(a)}{u_1(a)}<\frac{u_2'(a)}{u_2(a)}\qquad
	\text{\bigg(resp. \ }\frac{u_1'(b)}{u_1(b)}>\frac{u_2'(b)}{u_2(b)}\text{ \bigg)}.$$
\end{proposition}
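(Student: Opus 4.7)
The plan is to reduce the nonlinear extremal operators $\L_*$ and $\L^*$ to a single linear elliptic inequality for the difference $w := u_1 - \mu u_2$, where $\mu := \max_{[a,b]} u_1/u_2$, and then invoke the classical strong maximum principle together with Hopf's lemma. The hypotheses $\max u_1 > 0$ and $\min u_2 > 0$ give $\mu > 0$, so $\mu u_2 > 0$ on $[a,b]$ and by construction $w \leq 0$ there. The statements of the proposition then translate into: $w$ cannot attain its maximum $0$ at an interior point unless $w \equiv 0$, and, when the maximum is attained at an endpoint, the one-sided derivative of $w$ is strictly signed.

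The key step, and main obstacle, is the linearization. In the case $\L_* u_2 \leq 0 \leq \L_* u_1$, multiplying the first inequality by $\mu > 0$ and subtracting from the second yields
\[
w'' \geq \beta\bigl(|u_1'| - \mu|u_2'|\bigr) + \gamma\bigl(|u_1| - \mu u_2\bigr).
\]
The reverse triangle inequality $|u_1'| = |w' + \mu u_2'| \geq \mu |u_2'| - |w'|$, together with the elementary bound $|u_1| \geq u_1 = w + \mu u_2$, gives $|u_1'| - \mu|u_2'| \geq -|w'|$ and $|u_1| - \mu u_2 \geq w$; using $\gamma \geq 0$ and $w \leq 0$ this produces
\[
w'' + \beta|w'| - \gamma w \geq 0.
\]
In the symmetric case $\L^* u_2 \leq 0 \leq \L^* u_1$, the analogous computation with the direct triangle inequality in place of its reverse delivers the same bound. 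Writing $\beta|w'| = b(x)\, w'$ with $b(x) := \beta \sign(w'(x))$ (and $b := 0$ wherever $w' = 0$), the inequality becomes the linear elliptic differential inequality $w'' + b\, w' - \gamma\, w \geq 0$ with $|b| \leq \beta$ and nonpositive zero-order coefficient $-\gamma$.

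From here the argument is standard. The strong maximum principle forbids $w \leq 0$ from attaining its maximum $0$ at an interior point unless $w \equiv 0$, which is precisely $u_1/u_2 \equiv \mu$ on $[a,b]$. If instead the maximum is attained only at the endpoint $a$, then $w < 0$ on $(a,b]$ and Hopf's lemma delivers $w'(a) < 0$; since $u_1(a) = \mu u_2(a) > 0$, dividing by $u_1(a)$ yields the claimed inequality $u_1'(a)/u_1(a) < u_2'(a)/u_2(a)$, and the case of the endpoint $b$ is symmetric. The nonvanishing condition $|u_1'| + |u_2'| \neq 0$ is needed, as the authors emphasize, to rule out simultaneous critical points that would obstruct the effective reduction to the linear setting and, in particular, the application of Hopf's lemma.
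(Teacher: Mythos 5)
Your proof is correct, and it takes a genuinely different route from the paper's. The paper works with the \emph{ratio} $w=u_1/u_2$: at an interior maximum $x_0$ of $w$, the relation $w'(x_0)=0$ together with the hypothesis $|u_1'|+|u_2'|\neq 0$ forces $u_1'$ and $u_2'$ to share the same strict sign near $x_0$, so that the nonlinear extremal operator $\L_*$ (or $\L^*$) acts on both $u_1$ and $u_2$ as one and the same \emph{linear} operator $\tilde\L$ in a neighborhood; the classical ratio-form strong maximum principle then applies. You instead work with the \emph{additive difference} $w=u_1-\mu u_2$, $\mu=\max u_1/u_2>0$, and derive by triangle-inequality estimates a single global linear elliptic inequality $w''+\beta|w'|-\gamma w\geq 0$ (with the sign-dependent coefficient $b(x)=\beta\sign w'(x)$ absorbing the nonlinearity), after which the SMP and Hopf's lemma conclude. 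The upshot is that your linearisation holds everywhere on $(a,b)$ rather than only locally near a critical point, which is a cleaner reduction.

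Two remarks are in order. First, contrary to your closing sentence, your argument does \emph{not} actually invoke the hypothesis $|u_1'|+|u_2'|\neq 0$: the inequality $w''+\beta|w'|-\gamma w\geq 0$ holds regardless of simultaneous critical points, $b(x)=\beta\sign w'(x)$ is bounded and measurable, and the SMP and Hopf's lemma for $W^{2,\infty}$ (or $W^{2,p}$, $p\geq N$) solutions with bounded measurable coefficients place no non-vanishing requirement on the derivatives. Your proof therefore covers a slightly more general statement; this is a feature, not a flaw, but the attribution of a role to that hypothesis should be dropped. Second, in the $\L^*$ case the zero-order term requires a bound genuinely different from the one you display for $\L_*$: after the subtraction one has $w''\geq -\beta\big(|u_1'|-\mu|u_2'|\big)-\gamma\big(|u_1|-\mu u_2\big)$, and to conclude $-\gamma\big(|u_1|-\mu u_2\big)\geq \gamma w$ one needs $|u_1|-\mu u_2\leq -w$, i.e.\ $u_1^+\leq\mu u_2$ (which holds because $u_1\leq\mu u_2$ and $\mu u_2>0$), rather than $|u_1|\geq u_1$. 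This is not literally ``the analogous computation with the direct triangle inequality'': the triangle-inequality swap handles only the first-order term. It does work, but it deserves to be written out.
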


\begin{proof}
	The argument is classical, see e.g.~\cite[Theorem~2.10]{Max}, even if here we deal with nonlinear operators. 
	We define $w:=u_1/u_2$. Assume that $M:=\max_{[a,b]}w>0$ is attained at some interior point $x_0\in(a,b)$. 
	It follows that $u_1(x_0)>0$. Moreover, $w'(x_0)=0$, that is,
	$u_1'(x_0)u_2(x_0)=u_1(x_0)u_2'(x_0)$, which, because $|u_1'|+|u_2'|\neq0$, implies that 
	$u_1'$ and $u_2'$ have the same strict sign.
	This means that $\L_* u_j=\t \L u_j$ or $\L^* u_j=\t \L u_j$ for $j=1,2$ in some neighborhood $J$ of $x_0$, 
	where $\t\L$ is a linear operator of the type $\t\L u=u''+\t\beta u'+\t\gamma u$. We then compute, in $J$,
	\[\begin{split}
	0\leq\t\L u_1 &=\t\L (u_2 w )\\
	&=u_2\bigg(w''+\Big(2\frac{u_2'}{u_2}+\t\beta\Big)w'\bigg)+
	\big(u_2''+\t\beta u_2'+\t\gamma u_2\big)w\\
	&=u_2\bigg(w''+\Big(\frac{u_2'}{u_2}+\t\beta\Big)w'+\frac{\t\L u_2}{u_2}\,w\bigg).
	\end{split}\]
	This means that $w$ is a subsolution in $J$ of an equation with nonpositive zero order term.
	We can therefore apply the strong maximum principle and infer that 
	$w\equiv M$ in $J$. We have thereby shown that the set where $w$ attains its maximum is both open and closed in $(a,b)$, 
	i.e., it is either empty or it coincides with the whole~$(a,b)$.
	
	It remains to prove the last statement of the proposition.
	Suppose that $w$ is not constant and that its maximum is attained at $a$ 
	(the other case is analogous).
	Then the Hopf lemma (see, e.g.,~\cite{GT}) implies that $w'(a)<0$, that is,
	$$0>\frac{u_1'(a) u_2(a)-u_2'(a)u_1(a)}{u_2^2(a)},$$
	from which the desired inequality follows because $u_1(a),u_2(a)>0$.
\end{proof}

Proposition \ref{pro:MP} will be used to compare sub and supersolutions of Cauchy problems. 
Let us anticipate how this will be done, since, contrary to the 
usual application, we will use subsolutions to get upper bounds and supersolutions 
to get lower bounds.
Namely, let $u,v$ be respectively 
a subsolution and a positive, monotone supersolution 
of an extremal operator such that $u(a)=v(a)$ and
$u'(a)>v'(a)$. Then $v$ is smaller than $u$ in a right neighbourhood of $y$.
If they cross at some point $b>a$, then we would get a contradiction with the 
\MP\ of Proposition \ref{pro:MP}.
This means that $u>v$ to the right of $y$, as long as $v,v'$ do not vanish.

The idea of the proof of \thm{1} consists in distinguishing the region where~$u$ 
is less steep than the exponential $e^{-\kappa x}$ from the points where it is steeper.
We recall that $\kappa$ is given by \eqref{k1D}.
The steepness refers to the ratio $-u'(x)/u(x)$. On one hand, in the first case $u$ decays at most as 
$e^{-\kappa x}$. On the other, we will show that if $u$ is steeper than $e^{-\kappa x}$
at a point $\bar x$ then $|u|$ hits the $x$-axis at some $\t x>\bar x$ with a certain slope
and then it eventually crosses back the exponential function $|u(\bar x)|e^{-\kappa x}$
at a later point.
This `bouncing property', depicted in Figure \ref{fig:bouncing},
is the object of the next lemma.
\begin{figure}[H]
\begin{center}
\vspace{-10pt}
\includegraphics[height=6.5cm]{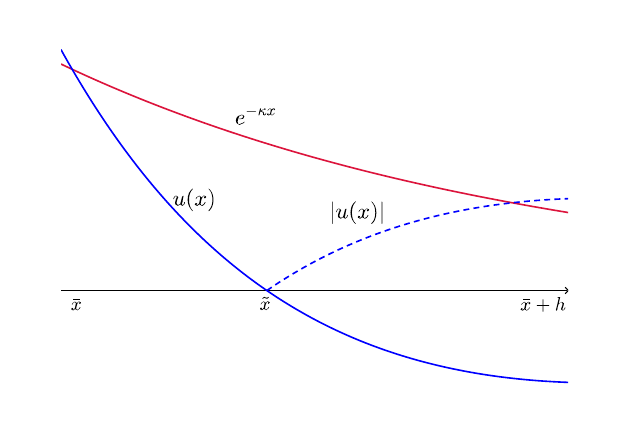}
\vspace{-10pt}
\caption{The `bounce' of $|u|$ when steeper than $e^{-\kappa x}$.}
\label{fig:bouncing}
\end{center}
\end{figure}
\begin{lemma}\label{bouncing}
\vspace{-10pt}
	Let $u\in W^{2,\infty}_{loc}(\R^+)$ satisfy $\L_* u\leq0\leq\L^* u$ in $\R^+$ 
	and assume that there exists $\bar x>0$ for which the 
	following occur:
	$$u(\bar x)\neq0,\qquad
	-\frac{u'(\bar x)}{u(\bar x)}>\kappa.$$
	Then there exists $h>0$ such that
	$$-\frac{u(\bar x+h)}{u(\bar x)}>e^{-\kappa h}.$$
\end{lemma}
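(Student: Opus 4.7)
Since $\L_*(-u)=-\L^*u$ and $\L^*(-u)=-\L_*u$, the function $-u$ satisfies the same two-sided extremal inequalities as $u$, so up to a sign I may assume $u(\bar x)>0$. Write $v_0(x):=u(\bar x)e^{-\kappa(x-\bar x)}$; the claim then amounts to producing $h>0$ such that $-u(\bar x+h)>v_0(\bar x+h)$. My plan is to force $u$ to cross zero by comparison with an explicit linear barrier, and then to show $|u|$ grows back past $v_0$ on the negative side.

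For the first step I would introduce the solution $w$ on $\R$ of the constant-coefficient linear ODE $w''+\beta w'-\gamma w=0$ matching the Cauchy data of $u$ at $\bar x$. Its characteristic roots are $-\kappa$ and $r_+:=(-\beta+\sqrt{\beta^2+4\gamma})/2\geq 0$, and the steepness hypothesis $-u'(\bar x)/u(\bar x)>\kappa$ forces the coefficient of $e^{r_+(x-\bar x)}$ in $w$ to be strictly negative. Hence $w$ is positive and decreasing on $[\bar x,\tilde x)$ up to a unique first zero $\tilde x>\bar x$, and (when $\gamma>0$) $w\to -\infty$ past $\tilde x$. While $u>0$ and $u'<0$, the expression $\L_*u$ reduces to $u''+\beta u'-\gamma u\leq 0$, so $u$ is a supersolution of the barrier ODE. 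Applying Proposition~\ref{pro:MP} on $[\bar x,b]$ with $u_1=w$, $u_2=u$ for $b<\tilde x$: the ratio $w/u$ equals $1$ at $\bar x$ with equal logarithmic derivatives, and the Hopf alternative forbids the maximum from being attained at $\bar x$ unless $u\equiv w$. In either case, letting $b\nearrow\tilde x$ and using $w(\tilde x)=0$ forces $u$ to vanish at some first zero $\tilde x_1\in(\bar x,\tilde x]$, and a classical Hopf boundary-point lemma applied to $\partial^2+\beta\partial-\gamma$ then gives $u'(\tilde x_1)<0$ strictly.

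The second step is the main difficulty. Setting $U:=-u$ and $\sigma:=U'(\tilde x_1)>0$, the two-sided bound $|U''|\leq\beta|U'|+\gamma U$ (consequence of the extremal inequalities) controls the deviation of $U$ from its linearization $U(x)\simeq\sigma(x-\tilde x_1)$; a short-time Gr\"onwall estimate should then give $U(x)\geq\tfrac12\sigma(x-\tilde x_1)$ on an interval of length depending only on $\beta$. For the bouncing it remains to verify that $U$ surpasses $v_0$ at some point, which this linearized estimate provides when $x_*-\tilde x_1$ is chosen of order $v_0(\tilde x_1)/\sigma$. The hard part will be to obtain a quantitative lower bound on $\sigma$ comparable to $v_0(\tilde x_1)$: the first-step comparison $u\leq w$ yields only the opposite inequality $\sigma\leq|w'(\tilde x_1)|$ (since $w-u\geq 0$ on $[\bar x,\tilde x_1]$ with equality at $\bar x$). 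The natural way to extract the required lower bound is a quantitative use of the Hopf alternative in Proposition~\ref{pro:MP}, converting its strict conclusion into an explicit estimate of $u'(\tilde x_1)-w'(\tilde x_1)$ controlled by how much $u$ sits below $w$ in the interior of $[\bar x,\tilde x_1]$---a gap that is itself quantitatively governed by the explicit form of $w$ and the strict steepness $-u'(\bar x)/u(\bar x)>\kappa$. The marginal case $\gamma=0$ (where $w$ does not diverge past $\tilde x$) would be handled by a similar perturbation of the first-step argument.
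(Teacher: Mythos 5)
Your first step follows the paper's, up to one choice that turns out to matter: the paper does \emph{not} match the Cauchy data of $u$ at $\bar x$; it takes the linear barrier $v$ with $v(\bar x)=u(\bar x)$ but $v'(\bar x)=-\kappa'u(\bar x)$ for a fixed $\kappa'$ strictly between $\kappa$ and $-u'(\bar x)/u(\bar x)$, so that $v'(\bar x)>u'(\bar x)$ strictly. The genuine gap in your proposal is precisely the step you yourself flag as ``the hard part'': the quantitative lower bound on $\sigma=-u'$ at the first zero. The paper does not obtain it from a quantified Hopf alternative. It proves instead the global derivative comparison $u'<v'$ on the whole interval up to the first zero of $u$: where $0<u<v$ and $u'<v'<0$ one has $(u-v)''\le-\beta(u-v)'+\gamma(u-v)\le-\beta(u-v)'$, i.e.\ $(\log(v'-u'))'\ge-\beta$, so $v'-u'$, once positive, can never vanish. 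This argument is seeded by $v'(\bar x)-u'(\bar x)>0$, which is exactly what the $\kappa'$ trick provides; with your exact matching of Cauchy data the seed is zero and the argument yields nothing. From $u'\le v'$ at the zero $\tilde x$ one reads off $\sigma\ge -v'(\tilde x)=A\kappa e^{-\kappa\tilde x}-B\lambda e^{\lambda\tilde x}>\kappa e^{-\kappa\tilde x}$ (since $A>1$, $B<0$), i.e.\ the slope at the crossing strictly exceeds $\kappa$ times the value of the reference exponential there. This is the bound you are missing.

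Your plan for the second half would also not close at the sharp rate $\kappa$. A linearized Gr\"onwall bound of the form $U\ge\tfrac12\sigma(x-\tilde x)$ loses a fixed factor, while the margin $\sigma-\kappa e^{-\kappa\tilde x}$ can be arbitrarily small (it degenerates as $-u'(\bar x)/u(\bar x)\downarrow\kappa$). To have $\tfrac12\sigma h>e^{-\kappa(\tilde x+h)}$ with $\sigma\approx\kappa e^{-\kappa\tilde x}$ one is forced to take $h$ of order $1/\kappa$, and on an interval of that length the crude control $|U''|\le\beta|U'|+\gamma U$ does not keep $U'$ within a factor $2$ of $\sigma$ (already $2\beta h\sim2\beta/\kappa$ can be of order $2$, since $\kappa\le 2\max(\beta,\sqrt\gamma)$ up to constants). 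The paper avoids any constant loss by solving the extremal Cauchy problem $\L^*w=0$, $w(0)=0$, $w'(0)=1$ \emph{exactly} (Lemma~\ref{lem:L*}) and checking, in each of the three cases $\beta^2\gtrless4\gamma$, that at the first critical point $\hat x$ one has $w(\hat x)>\kappa^{-1}e^{-\kappa\hat x}$; combined with the slope bound above and Proposition~\ref{pro:MP} applied to a small translate of $w$ (to make the comparison strict at the left endpoint), this gives $-u(\tilde x+\hat x)\ge\kappa e^{-\kappa\tilde x}\,w(\hat x)>e^{-\kappa(\tilde x+\hat x)}$. In short, the two ingredients absent from your write-up are the derivative comparison $u'<v'$ (which requires the $\kappa'$ seeding) and the exact extremal computation of Lemma~\ref{lem:L*}; neither is supplied by a generic Hopf or Gr\"onwall estimate.
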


\begin{proof}	
	If $\kappa=0$, i.e.~$q\equiv V\equiv0$, the result trivially holds.
	Suppose that $\kappa\neq0$.
	Up to replacing the function $u$ with $u(\bar x+\.)/u(\bar x)$, we can assume 
	without loss of generality that $\bar x=0$ and that $u(0)=1$,
	$u'(0)<-\kappa$. With this change, the proof amounts to showing that $-u(h)>e^{-\kappa h}$
	for some $h>0$.
	
%
	Consider a solution $v$ of the equation 
	$v''+\beta v'-\gamma v=0$, that is,
	$$v(x)=Ae^{-\kappa x}+Be^{\lambda x},$$
	for some $A,B\in\R$ and 
	$$\lambda=-\frac{\beta}2+\sqrt{\frac{\beta^2}4+\gamma}\geq0.$$
	Imposing $v(0)=1$ and $v'(0)=-\kappa'$, where $\kappa'$ is a fixed number satisfying 
	$\kappa<\kappa'<-u'(0)$, reduces to the system
	$$\begin{cases}
		A+B=1\\
		-A\kappa+B\lambda=-\kappa'.
	\end{cases}$$
	It follows that $B(\kappa+\lambda)=\kappa-\kappa'<0$, whence $B<0$ and therefore $A>1$.
	As a consequence, $v'$ is negative and $v$ vanishes at some point $\xi>0$, which means that
	$\L_* v=0$ in~$(0,\xi)$. 
	Applying Proposition \ref{pro:MP} with $u_1=v$, $u_2=u$ we deduce that,
	for every $b>0$ such that $u>0$ in $[0,b]$, there holds
	$$\max_{[0,b]}\frac{v}{u}=\max\left\{1,\frac{v(b)}{u(b)}\right\}.$$
	The above left-hand side is larger than $1$ because $v'(0)>u'(0)$. It follows that 
	$v(b)>u(b)$. This means that the inequality $u<v$ holds as long as $u$ remains
	positive, and therefore $u$ must vanish somewhere in $(0,\xi]$. 
	Let $\t x$ denote the first zero of $u$ in $(0,\xi]$.
	
	We claim that $u'<v'$ in $(0,\t x)$. 	
	Assume that this is not the case. Let $\zeta$ be the smallest point in $(0,\t x)$
	where $u'=v'$. In the interval $(0,\zeta)$ we have that $u'<v'<0$ and $0<u<v$,
	from which we obtain
	$$(u-v)''\leq-\beta(u-v)'+\gamma(u-v)\leq -\beta(u-v)'.$$
	This inequality can be rewritten as $(\log(v'-u'))'\geq-\beta$. Therefore, $ \log(v'-u')$
	is bounded from below in $[0,\zeta)$, contradicting $u'(\zeta)=v'(\zeta)$.
	
	Let us call $\t u:=-u$.
	The following properties hold at the point $\t x$:
	$$\t u(\t x)=0,\qquad
	\t u'(\t x)\geq -v'(\t x)= A\kappa e^{-\kappa\t x}-B\lambda e^{\lambda\t x}>
	\kappa e^{-\kappa\t x}.$$
	We will derive the desired lower bound for $\t u$ by comparison
	with a solution associated with the extremal operator~$\L^*$,
	for which the following holds.
	\begin{lemma}\label{lem:L*}
		Assume that $\kappa>0$. Let $w$ be the solution of the Cauchy problem
		\Fi{CP}
		\begin{cases}
			\L^* w=0 & \text{in }\;\R^+\\
			w(0)=0\\
			w'(0)=1.
		\end{cases}
		\Ff
		Then there exists $\hat x>0$ such that $w'>0$ in $(0,\hat x)$ and moreover
		$$w(\hat x)>\frac1\kappa e^{-\kappa\hat x}.$$
	\end{lemma}
	Let us postpone the proof of Lemma \ref{lem:L*} until the end of the current one. 
	Consider the function $w$ and the number $\hat x$ provided by the lemma. 
	Then, for $\e\in(0,\hat x)$,~set
	$$u_2(x):= \kappa e^{-\kappa\t x}w(x-\t x+\e).$$
	The functions $\t u,u_2$ satisfy $\L^*u_2=0\leq \L^*\t u$ in $[\t x,\t x+\hat x-\e]$.
	We can therefore apply Proposition~\ref{pro:MP} and derive
	\Fi{MPshifted}
	\max_{[\t x,\t x+\hat x-\e]}\frac{\t u}{u_2}=
	\max\left\{\frac{\t u(\t x)}{u_2(\t x)},\frac{\t u(\t x+\hat x-\e)}{u_2(\t x+\hat x-\e)}\right\}=
	\max\left\{0,\frac{\t u(\t x+\hat x-\e)}{u_2(\t x+\hat x-\e)}\right\}.
	\Ff
	In order to get a lower bound for the above left-hand side, we compute
	\[\begin{split}
	\t u(\t x+\sqrt\e)-u_2(\t x+\sqrt\e) &= \t u'(\t x)\sqrt\e-u_2'(\t x-\e)(\sqrt\e+\e)+o(\sqrt\e)\\
	&=\sqrt\e\big(\t u'(\t x)-\kappa e^{-\kappa\t x}\big)+o(\sqrt\e).
	\end{split}\]
	Recalling that $\t u'(\t x)>\kappa e^{-\kappa\t x}$, 
	we deduce that $\t u(\t x+\sqrt\e)>u_2(\t x+\sqrt\e)$
	for $\e$ small enough. It then follows from \eqref{MPshifted} that 
	$\t u(\t x+\hat x-\e)>u_2(\t x+\hat x-\e)$ for $\e$ sufficiently small, that is,
	$-u(\t x+\hat x-\e)>\kappa e^{-\kappa\t x} w(\hat x)$.
	Letting $\e\to0$, we finally get the desired inequality
	$$-u(\t x+\hat x)\geq \kappa e^{-\kappa\t x} w(\hat x)>e^{-\kappa(\t x+\hat x)}.$$
\end{proof}

\begin{proof}[Proof of Lemma \ref{lem:L*}]
	The function $w$ is positive and increasing up to a value $\hat x\in(0,+\infty]$.
	In the interval $(0,\hat x)$, $w$ satisfies 
	$w''+\beta w'+\gamma w=0$. 
	We treat the different types of solutions of this equation separately.
	
		\medskip
	{\em Case $\beta^2>4\gamma$.}
	
	\noindent 
	In this case the solution $w$ is given in $(0,\hat x)$ by
	$$w(x)=\frac1{2\omega}\, e^{-\frac{\beta}{2}x}\left(e^{\omega x}-e^{-\omega x}\right),
	\quad\text{with }\ \omega:=\frac{\sqrt{\beta^2-4\gamma}}{2}>0.$$
	If $\gamma=0$ then $\omega=\frac\beta2$ and therefore $w$ is increasing,
	which immediately entails the conclusion of the lemma.
	If $\gamma>0$ then $\omega<\frac\beta2$ and we find that
	$\hat x$ is a critical point for $w$, characterised by
	$$\frac{\beta}{2}\left(e^{\omega \hat x}-e^{-\omega \hat x}\right)=
	\omega\left(e^{\omega \hat x}+e^{-\omega \hat x}\right).$$
	Using this equivalence, we derive
	\[\begin{split}
	w(\hat x)e^{\kappa\hat x} &=
	\frac1{2\omega}\, e^{\left(\kappa-\frac{\beta}{2}\right)\hat x}
	\left(e^{\omega \hat x}-e^{-\omega \hat x}\right)\\
	&=\frac1\beta e^{\left(\kappa-\frac{\beta}{2}\right)\hat x}
	\left(e^{\omega \hat x}+e^{-\omega \hat x}\right)	\\
	&>\frac{2}\beta e^{\left(\kappa-\frac{\beta}{2}-\omega\right)\hat x}.
	\end{split}\]
	Because $\kappa\geq\frac{\beta}{2}+\omega$, the above right-hand side
	is larger than~$\frac1\kappa$. The proof of the lemma is thereby achieved in this case.
	
	\medskip	
	{\em Case $\beta^2=4\gamma$.}
	
	\noindent 
	Observe preliminarily that $\beta,\gamma\neq0$, because otherwise $\kappa=0$.
	The solution $w$ is given by
	$$w(x)=xe^{-\frac\beta{2}x}.$$
	We see that $w'(\hat x)=0$, with $\hat x=\frac{2}\beta$.
	Direct computation reveals that
	$$w(\hat x)e^{\kappa\hat x}=\frac{2}\beta e^{-1+\frac{2}\beta \kappa},$$
	which is larger than $\kappa^{-1}$ because  $\kappa>\frac{\beta}{2}$.
	
	\medskip
	{\em Case $\beta^2<4\gamma$.}
	
	\noindent 
	The solution $w$ is now given by
	$$w(x)=\frac1\omega\, e^{-\frac{\beta}{2}x}\sin(\omega x),
	\quad\text{with }\ \omega:=\frac{\sqrt{4\gamma-\beta^2}}{2}.$$
	Then, $\hat x=\frac\pi{2\omega}$, and there holds 
	$$w(\hat x)e^{\kappa\hat x}=
	\frac1\omega\, e^{\left(\kappa-\frac{\beta}{2}\right)\hat x},$$
	which is larger than $\kappa^{-1}$ because $\kappa\geq\omega$ as well as 
	$\kappa>\frac\beta{2}$. 
\end{proof}	


\begin{proof}[Proof of \thm{1}]
	Take $x_0>0$. Suppose that $|u(x_0)|\neq 0$, otherwise the result trivially holds.
	Fix $c\in(0,|u(x_0)|)$ and define
	$$\bar x:=\sup\left\{x\geq x_0\ :\ |u(x)|>c e^{-\kappa(x-x_0)}\right\}.$$
	By continuity we know that $\bar x>x_0$.
	Assume by way of contradiction that $\bar x<+\infty$.
	This means that $|u(\bar x)|=c e^{-\kappa(\bar x-x_0)}$ and
	there exists a sequence $\seq{x}$ such that $x_n\nearrow\bar x$ as $n\to\infty$ and
	$|u(x_n)|>c e^{-\kappa(x_n-x_0)}$. 
	Up to replacing $u$ with $-u$ if need be, it is not restrictive to assume that 
	$u(\bar x)>0$, and thus $u>0$ in $[x_{\bar n},\bar x]$ for some $\bar n\in\N$.
	Since $u$ and $c e^{-\kappa(x-x_0)}$ are respectively a supersolution and a 
	solution of $\L_* =0$,
	it follows from the maximum principle of Proposition~\ref{pro:MP}
	that $u(x)\geq c e^{-\kappa(x-x_0)}$ for $x\in(x_{\bar n},\bar x)$. 
	Actually, the second statement of the proposition 
	implies that the inequality is strict, because it is strict at 
	$x_{\bar n}$, and in addition there holds that
	$$u'(\bar x)<-\kappa c e^{-\kappa(\bar x-x_0)}=-\kappa u(\bar x).$$
	We can finally apply the `bouncing' Lemma \ref{bouncing}, which provides us with some
	$h>0$ such that
	$$-u(\bar x+h)>u(\bar x)e^{-\kappa h}=c e^{-\kappa(\bar x-x_0+ h)}.$$
	This contradicts the definition of $\bar x$.
	
	We have thereby shown that $\bar x=+\infty$, that is,
	$$\varlimsup_{x\to+\infty}|u(x)|e^{\kappa x}\geq c e^{\kappa x_0}.$$ 
	This concludes the proof, due to the arbitrariness of $c\in(0,|u(x_0)|)$.
\end{proof}	
	We conclude the study of the $1$-dimensional case with an estimate of
	the distance between points where $u$ `does not decay too fast'.
\begin{proposition}\label{dense}
	Let $u$ be a solution of $\L u=0$ in $\R^+$ satisfying $u(0)=1$. 
	Then, for every
	$$
		\kappa'>\sup\frac{|q|}{2\alpha}+\sqrt{\sup\frac{|q|^2}{4\alpha^2}
		+\sup\frac{|V|}\alpha},
	$$
	there exists $h>0$ depending on $\|q/\alpha\|_\infty$, 
	$\|V/\alpha\|_\infty$ and $\kappa'$
	such that 
	$$\sup_{\bar x<x<\bar x+h}\big(|u(x)|-e^{-\kappa' x}\big)>0
	\quad\text{for all }\;\bar x>0.$$
\end{proposition}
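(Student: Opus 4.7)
The plan is to adapt the proof of \thm{1}, replacing the reference solution $c\,e^{-\kappa(x-x_0)}$ with the target exponential $e^{-\kappa' x}$ and exploiting the strict inequality $\kappa'>\kappa$ as a quantitative margin. Two observations make this replacement work. First, since $\kappa^2-\beta\kappa-\gamma=0$ and the quadratic $t\mapsto t^2-\beta t-\gamma$ is increasing on $[\kappa,+\infty)$, one has
\[
\L_*\bigl(e^{-\kappa' x}\bigr) = \bigl((\kappa')^2-\beta\kappa'-\gamma\bigr)e^{-\kappa' x} > 0,
\]
so $e^{-\kappa' x}$ is now a \emph{strict} subsolution of $\L_*=0$. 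Consequently, at any point $a$ where $|u|$ meets $e^{-\kappa' x}$ from above, Proposition~\ref{pro:MP} yields the strict Hopf-type inequality $-u'(a)/u(a)>\kappa'$. Second, the gap $\kappa'-\kappa>0$ converts the bouncing estimate $|u(a+h)|>|u(a)|e^{-\kappa h}$ into the desired $|u(a+h)|>e^{-\kappa'(a+h)}$, since $|u(a)|e^{-\kappa h}=e^{-\kappa' a-\kappa h}>e^{-\kappa'(a+h)}$.

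With these ingredients, the proof reduces to bounding the length of each connected component of $S^c:=\{x\geq 0:|u(x)|\leq e^{-\kappa' x}\}$, where $S:=\{x>0:|u(x)|>e^{-\kappa' x}\}$. For an interior component $[a,b]$ with $a>0$, the inequality $|u|>e^{-\kappa' x}$ holds on a left neighborhood of $a$, and (after replacing $u$ by $-u$ if needed so that $u(a)>0$) Proposition~\ref{pro:MP} applied to $u_1=e^{-\kappa' x}$ and $u_2=u$ produces the strict inequality $-u'(a)/u(a)>\kappa'$. One then invokes Lemma~\ref{bouncing} at $\bar x=a$, choosing the auxiliary parameter (called $\kappa'$ in the proof of that lemma) equal to our fixed $\kappa'$; this choice is legitimate precisely because of the strict Hopf inequality. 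Retracing the construction shows that the output $h$ is bounded by $h^*:=\xi+\hat x$, where $\xi$ is the zero of the linear combination $Ae^{-\kappa x}+Be^{\lambda x}$ whose coefficients are determined by $\kappa$, $\lambda$, $\kappa'$, and $\hat x$ is furnished by Lemma~\ref{lem:L*} explicitly in terms of $\beta$ and $\gamma$. The bouncing then yields some $a+h\in S$ with $h\leq h^*$, hence $b-a\leq h^*$.

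The first component $[0,b_1]$, when present, is dispatched by applying Lemma~\ref{bouncing} at $\bar x=0$: from $u(0)=1$ and $u(x)\leq e^{-\kappa' x}$ for $x\in(0,b_1)$ one extracts $-u'(0)\geq\kappa'$, and in the strict case one gets $b_1\leq h^*$; the degenerate case $-u'(0)=\kappa'$ is avoided by rerunning the whole argument with some $\kappa''\in(\kappa,\kappa')\setminus\{-u'(0)\}$ in place of $\kappa'$, noting that $\{|u|>e^{-\kappa'' x}\}\subset S$ so density of the smaller set implies density of $S$. The possibility $b=+\infty$ is ruled out by \thm{1}, as it would force $|u(x)|e^{\kappa x}\to 0$. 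Setting $h:=h^*+1$, any $\bar x>0$ either already satisfies $|u(\bar x)|>e^{-\kappa'\bar x}$ (and continuity concludes), or lies in a component of $S^c$ of length at most $h^*$, so a point slightly to the right of that component's right endpoint belongs to $(\bar x,\bar x+h)\cap S$. The main obstacle is ensuring that Lemma~\ref{bouncing}'s output $h$ is bounded uniformly in $u$ and $a$: this is exactly what the strict Hopf inequality delivered by $\kappa'>\kappa$ enables, allowing the auxiliary parameter inside Lemma~\ref{bouncing} to be fixed a priori in terms of the data.
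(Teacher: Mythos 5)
Your proof is correct and takes a genuinely different route from the paper's. The paper argues by compactness: assuming the statement fails, it extracts a sequence of translated, normalized solutions of equations whose coefficients obey the same bounds, passes to a limit using elliptic estimates, and shows that the limit function simultaneously satisfies $v'(0)\leq-\kappa'<-\kappa$ (so Lemma~\ref{bouncing} forces a bounce) and $|v|\leq e^{-\kappa'x}$ on all of $\R^+$ (which rules out the bounce). Your argument is direct and quantitative: you bound the length of each component of $\{|u|\leq e^{-\kappa'x}\}$ by applying the Hopf conclusion of Proposition~\ref{pro:MP} at its left endpoint with $u_1=e^{-\kappa'x}$ (a subsolution of $\L_*$ precisely because $\kappa'>\kappa$), and then trace through Lemma~\ref{bouncing} to get an explicit bound $h^*=\xi+\hat x$ on the bounce distance. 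Both approaches are sound; yours has the advantage of producing an explicit $h$ in terms of $\beta$, $\gamma$, $\kappa'$, while the paper's compactness argument is shorter and avoids re-opening the internals of the bouncing lemma. One presentational remark: your handling of the possible first component $[0,b_1]$ via ``rerunning with $\kappa''$'' is more convoluted than needed, and as written produces a bound $h^*$ that is not obviously uniform over all components. A cleaner uniform fix is to always take the auxiliary parameter inside Lemma~\ref{bouncing} equal to a fixed $\kappa''\in(\kappa,\kappa')$, say $\kappa''=(\kappa+\kappa')/2$: this choice is admissible at every endpoint $a$ because $-u'(a)/u(a)\geq\kappa'>\kappa''$ there, the resulting $h^*$ depends only on $\beta,\gamma,\kappa'$, and the bounce conclusion $|u(a+h)|>|u(a)|e^{-\kappa h}$ still dominates $e^{-\kappa'(a+h)}$ since $\kappa<\kappa'$, in both the interior case ($|u(a)|=e^{-\kappa'a}$) and the first-component case ($u(0)=1$).
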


\begin{proof}
	Assume by way of contradiction that there exist some functions $\seq{\alpha}$,
	$\seq{q}$, $\seq{V}$, $\seq{u}$ such that	
	$$\alpha_n (x)u_n''+q_n(x)u_n'+V_n(x)u_n=0\inn\R^+,$$  
	with
	$$\frac{|q_n|}{\alpha_n}\leq\beta,\quad
	\frac{|V_n|}{\alpha_n}\leq\gamma,$$	
	and moreover $u_n(0)=1$ and
	$$|u_n(x)|\leq e^{-\kappa' x}\for x\in[x_n,x_n+n],$$
	for some $x_n\geq0$.
	Up to replacing $u_n$ with $-u_n$ and decreasing $x_n$ if need be,
	we can assume without loss of generality that $u_n(x_n)=e^{-\kappa' x_n}$.
	Consider the functions $\seq{v}$ defined by 
	$$v_n(x):=u_n(x_n+x)e^{\kappa' x_n}.$$
	They satisfy some linear equations of the form
	$$v_n''+\t q_n(x)v_n'+\t V_n(x)v_n=0\inn\R^+,$$ 
	with $|\t q_n|\leq\beta$, $|\t V_n|\leq\gamma$, together with
	$v_n(0)=1$ and
	$$|v_n(x)|\leq e^{-\kappa' x}\for x\in[0,n].$$
	We now use standard elliptic estimates.
	They imply that the $\seq{v}$ are uniformly bounded in $W^{2,p}((0,R))$,
	for all $p<+\infty$ and $R>0$, and thus in 	$C^{1,\delta}([0,R])$, $\delta\in(0,1)$,
	by Morrey's inequality.
	We can then pass to the (weak) limit in 
	the inequalities $\L_* v_n\leq0\leq\L^* v_n$ and we find that (up to subsequences)
	$\seq{v}$ converges locally uniformly in $[0,+\infty)$ to a function 
	$v\in  W^{2,p}_{loc}(\R^+)\cap C^1([0,+\infty))$ satisfying $\L_* v\leq0\leq\L^* v$ and moreover
	$v(0)=1$ and $|v(x)|\leq e^{-\kappa' x}$ for $x>0$. We deduce 
	in particular that $v'(0)\leq-\kappa'<-\kappa$. It then follows from Lemma \ref{bouncing}
	that $v(h)<-e^{-\kappa h}$ for some~$h>0$, which is impossible because
	$|v(h)|\leq e^{-\kappa' h}$.
\end{proof}
	

\bigskip
\section{The radial cases}

We now turn to the $N$-dimensional case, considering first radial solutions.
\begin{proof}[Proof of Corollary \ref{cor:urad}]
	Assume by contradiction that there exists a nontrivial, radial solution $u(x)=\phi(|x|)$
	such that $\phi(r)\prec e^{-\kappa r}$ for some $\kappa$ satisfying
	$$\kappa>\varlimsup_{|x|\to+\infty}\frac{|q|}{2\alpha}+\sqrt{\varlimsup_{|x|\to+\infty}
		\frac{|q|^2}{4\alpha^2}
		+\varlimsup_{|x|\to+\infty}\frac{|V|}\alpha}.$$
	The function $\phi$ belongs to $W^{2,N}_{loc}((R_0,+\infty))$, 
	where $R_0>0$ is such that $\R^N\setminus B_{R_0}\subset\O$. 
	Let~$e_1$ be the first vector of the canonical basis of $\R^N$.
	For $r>R_0$, we compute 
	$$\L u(re_1)=A_{11}(re_1)\phi''(r)+\bigg(q_1(re_1)+\frac{\Tr A(re_1)-A_{11}(re_1)}{r}\bigg)\phi'(r)
	+V(re_1)\phi(r)=0.$$
	Namely, $\phi$ satisfies the equation $\t\L\phi=0$ in $(R_0,+\infty)$, where
	$$\t\L\phi:=\t \alpha(r)\phi''+\t q(r)\phi'+\t V(r)\phi,$$
	with 
	$$\t\alpha(r):=A_{11}(re_1),\qquad \t q(r):=q_1(re_1)+\frac{\Tr A(re_1)-A_{11}(re_1)}{r},
	\qquad\t V(r):=V(re_1).$$
	We have that $\t\alpha(r)\geq\alpha(re_1)$, where $\alpha(x)$ is the smallest eigenvalue
	of $A(x)$. Therefore,
	$$\frac{|\t q(r)|}{2\t\alpha(r)}\leq \sup_{\R^N\setminus B_r}\frac{|q|}{2\alpha}+Cr^{-1},
	\qquad \frac{|\t V(r)|}{\t \alpha(r)}\leq\sup_{\R^N\setminus B_r}\frac{|V|}{\alpha},$$
	where $C$ only depends on $N$ and the $L^\infty$ norm of the coefficients of $A$.
	In particular, for $R>R_0$ sufficiently large, there holds
	$$\kappa>\sup_{r>R}\frac{|\t q|}{2\t \alpha}+\sqrt{\sup_{r>R}
		\frac{|\t q|^2}{4\t\alpha^2}
		+\sup_{r>R}\frac{|\t V|}{\t\alpha}}.$$
	As a consequence, applying \thm{1} to the operator $\t\L$, we infer that
	$\phi=0$ in~$(R,+\infty)$. We would like to conclude from this that $u\equiv0$ in $\O$
	by means of the unique continuation property.
	However, the matrix $A$ being only in $L^\infty(\O)$, we
	are not in the regularity framework where such result applies.
	We overcome this difficulty by reducing to the $1$-dimensional case.
	Consider any $\hat R>0$ such that $\phi$ is defined in $(\hat R,+\infty)$.
	It satisfies there 
	$$|\phi''|\leq C'(1+\hat R^{-1})|\phi'|+C''|\phi|,$$
	for some $C',C''>0$. We can now apply the unique continuation property of 
	\cite{Continuation},~or even the classical Carath\'eodory theorem
	for ODEs, to deduce that $\phi\equiv0$ in $(\hat R,+\infty)$. 
	By the arbitrariness of $\hat R$, this means that
	$u\equiv0$ in $\O$, contradicting our initial assumption.
\end{proof}
	
	Next, we consider radial operators.
	In the sequel, $S=\partial B_1$ stands for the unit sphere in $\R^N$ centred at the origin
	and we let $dS$ denote its surface element.  

\begin{proof}[Proof of \thm{Lrad}]
	Assume by contradiction that there exists a nontrivial solution satisfying
	$u(x)\prec e^{-\kappa |x|}$ with
	$$\kappa>\varlimsup_{r\to+\infty}\frac{|q|}{2}+\sqrt{\varlimsup_{r\to+\infty}
		\frac{|q|^2}{4}
		+\varlimsup_{r\to+\infty}|V|}.$$
	It is convenient to rewrite the equation in spherical coordinates.
	Let $R_0$ be such that $\R^N\setminus B_{R_0}\subset\O$
	and let $\t u:(R_0,+\infty)\times S\to\R$ be the expression for $u$ in spherical coordinates,
	i.e., $\t u(r,\sigma):=u(r\sigma)$.
	The Laplace operator rewrites as follows:
	$$\Delta u(r\sigma)=\frac1{r^{N-1}}\partial_r(r^{N-1}\partial_r\t u)+\frac1{r^2}\Delta_\sigma\t u,$$
	with $\Delta_\sigma$ indicating the Laplace-Beltrami operator on the sphere $S$. 
	Then, using the identity $\partial_r\t u(r,\sigma)=\sigma\.\nabla u(r\sigma)$, we find that
	$$\L u(r\sigma)=
	\frac1{r^{N-1}}\partial_r(r^{N-1}\partial_r\t u)+\frac1{r^2}\Delta_\sigma\t u
	+q(r)\partial_r\t u(r,\sigma)+V(r)\t u=0,\quad \ r>R_0,\ \sigma\in S.$$
	The eigenvalues of $-\Delta_\sigma$ (counted with their multiplicity) are given by
	$$0=\lambda_1<\lambda_2\leq\lambda_3\leq\cdots$$
	Let $\vp_1\equiv1,\vp_2,\vp_3,\dots$ be the corresponding eigenfunctions,
	with $L^2(S)$ norm equal to $1$.
	We would like to multiply the equation for $\t u$ by $\vp_j$, $j=1,\dots$ , 
		and integrate it on the sphere, in order to get an 
	ODE for the projections $u_j$ defined~by
	$$u_j(r):=\int_{S}\t u(r,\sigma)\vp_j(\sigma)\, dS_\sigma.$$
	This cannot directly be done because $\partial_{rr}\t u$, $\Delta_\sigma\t u$
	are just in $L^p_{loc}((R_0,+\infty)\times S)$, for all $p<+\infty$.
	The lower order terms do not pose any problem because
	$\t u\in C^1((R_0,+\infty)\times S)$ by Morrey's inequality, and thus
	$$u_j'(r)=\int_{S}\partial_r\t u(r,\sigma)\vp_j(\sigma)\, dS_\sigma.$$
	In order to derive the equation for $u_j$,  we consider
  	$\psi\in C^\infty_c((R_0,+\infty))$ and compute
	\[\begin{split}
	\int_{\R^N\setminus\ol B_{R_0}}(\Delta u)\vp_j\bigg(\frac x{|x|}\bigg)\psi(|x|)\,dx
	& =\int_{\R^N\setminus\ol B_R} u\, \Delta\left(\vp_j\bigg(\frac x{|x|}\bigg)\psi(|x|)\right)dx\\
	&= \int_{R_0}^{+\infty}dr \int_{S}
	\t u \Big( \partial_r(r^{N-1}\psi'(r))-r^{N-3}\lambda_j\psi(r)\Big)\vp_j(\sigma) d S_\sigma\\
	&= \int_{R_0}^{+\infty}u_j(r)\Big( \partial_r(r^{N-1}\psi'(r))-r^{N-3}\lambda_j\psi(r)\Big)dr\\
	&= \int_{R_0}^{+\infty}\Big(-u_j'(r) r^{N-1}\psi'(r)-u_j(r)r^{N-3}\lambda_j\psi(r)\Big) dr.
		\end{split}\]
	On the other hand, using the equation $\L u=0$, we get
	\[\begin{split}
	\int_{\R^N\setminus\ol B_{R_0}}(\Delta u)\vp_j\bigg(\frac x{|x|}\bigg)\psi(|x|)\,dx &=
	-\int_{R_0}^{+\infty}dr\int_{S}\big(q(r)\partial_r\t u+
	V(r)\t u\big)\vp_j(\sigma)\psi(r)r^{N-1}\,dS_\sigma\\
	&=-\int_{R_0}^{+\infty}r^{N-1}\big(q(r)u_j'(r)+V(r)u_j(r)\big)\psi(r)\,  dr. 
	\end{split}\]
	This means that the following equalities 
	hold in the distributional sense in $(R_0,+\infty)$:
	$$-q(r)u_j'-\left(V(r)-\frac{\lambda_j}{r^2}\right)u_j=\frac1{r^{N-1}}(r^{N-1}u_j')'=u_j''+\frac{N-1}r u_j'.$$
	It follows in particular that $u_j''\in L^\infty((R_0,+\infty))$ and thus the equation is satisfied a.e.
	The coefficients of this equation fulfil 
	$$\frac12\sup_{r>R}\left|\frac{q(r)}2+\frac{N-1}{2r}\right|
	+\sqrt{\sup_{r>R}\left|\frac{q(r)}2+\frac{N-1}{2r}\right|^2
		+\sup_{r>R}\left|V(r)-\frac{\lambda_j}{r^2}\right|}<\kappa,$$
	provided $R$ is larger than some $R_j$.
	Therefore, because
	$$|u_j(r)|\leq \|\t u(r,\.)\|_{L^2(S)}\leq
	\sqrt{|S|}\,\|\t u(r,\.)\|_{L^\infty(S)}\prec e^{-\kappa r},$$
	the $1$-dimensional \UCI\ property (\thm{1}) entails that $u_j(r)=0$ for $r>R_j$. 
	Then, owing to the unique continuation property for ODEs, 
	we have that $u_j\equiv 0$ in $(R_0,+\infty)$.
	Finally, being $\seq{\vp}$ an Hilbert basis for $L^2(S)$, we know that 
	$\t u(r,\sigma)=\sum_{j=1}^{+\infty} u_j(r)\vp_j(\sigma)$ in the $L^2$ sense, and thus a.e.
	We have thereby shown that $\t u(r,\sigma)=0$ for $r>R_0$. Applying again the 
	unique continuation property, this time for equations in dimension~$N$ 
	with leading term given by the
	Laplace operator, see \cite{Continuation,Hormander}, 
	we eventually conclude that $u\equiv0$ in $\O$.	
\end{proof}

\begin{remark}
	Looking at the proof of \thm{Lrad}, one realizes that more general second order terms than
	the Laplace operator are allowed. Namely, those expressed in spherical coordinates by
	$$\frac1{r^{N-1}}\partial_r(r^{N-1}\partial_r\t u)+\frac{\vt(r)}{r^2}\Delta_\sigma\t u,$$
	for a given function $\vt$, not necessarily positive. 
\end{remark}




\bigskip
\section{Positive supersolutions}\label{sec:parabolic}

In this section, we consider a parabolic equation associated with the operator
$$\P u=\partial_t u-\Tr(A(x,t)D^2 u)+q(x,t)\.Du+V(x,t)u.$$
We always assume that $A,q,V$ are in $L^\infty$ 
and that $A$ is continuous and uniformly elliptic, i.e., that the function
$$\alpha(x,t):=\min_{\xi\in \R^N\setminus\{0\}}\frac{A(x,t)\xi\.\xi}{|\xi|^2}$$
is bounded from below away from $0$.
Solutions, subsolutions and supersolutions
are now assumed to be in $L^{N+1}_{loc}$ with respect to the $(x,t)$ variable, 
together with their derivatives $Du,D^2 u,\partial_t u$.

Here is our main result concerning positive supersolutions, from which the other results
of the section readily follow.
It is achieved using a refinement of the argument of the proof of \cite[Lemma 3.1]{RR}.

\begin{theorem}\label{thm:ancient}
Let $u$ satisfy
$$\P u\geq 0,
\quad x\in\O,\ t<0,$$
where $\O$ is an exterior domain in $\R^N$, together with
$$
\inf_{\su{x\in K}{t<0}}u(x,t)>0,
$$
for any compact set $K\subset\O$.
Then, 
	$$u(x,0)\succ e^{-\kappa|x|},$$
	for all $\kappa$ satisfying
	\Fi{k2p}
	\kappa>\varlimsup_{|x|\to\infty}\left(\sup_{t<0}
	\bigg(\frac{|q|}{2\alpha}+\sqrt{\frac{|q|^2}{4\alpha^2}
		+\frac{|V|}\alpha}\,\bigg)\right).
	\Ff
\end{theorem}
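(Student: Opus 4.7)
The plan is, for any $\kappa > \kappa_0$ (where $\kappa_0$ denotes the $\varlimsup$ appearing in \eqref{k2p}), to fix a number $\kappa' \in (\kappa_0,\kappa)$ and establish the one-sided bound $u(x,0) \geq c\, e^{-\kappa'|x|}$ for all $|x|$ large enough, with $c > 0$ independent of $x$. Since $\kappa-\kappa'>0$, this yields $u(x,0)e^{\kappa|x|} \geq c\, e^{(\kappa-\kappa')|x|} \to +\infty$, which is exactly the conclusion $u(x,0) \succ e^{-\kappa|x|}$.

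The barrier is the radial function $v(x) := e^{-\kappa'|x|}$. A direct computation of $D^2 v$ in spherical coordinates (decomposing into the $\hat x\otimes\hat x$ and $I - \hat x\otimes\hat x$ components) gives
\[
\frac{\P v}{v} \;=\; -(\kappa')^2\,\frac{Ax\cdot x}{|x|^2} + \frac{\kappa'}{|x|}\left(\Tr A - \frac{Ax\cdot x}{|x|^2}\right) - \kappa' \,q\cdot\frac{x}{|x|} + V.
\]
Using $Ax\cdot x/|x|^2 \geq \alpha$ and $|q\cdot\hat x|\leq |q|$, this is bounded above by $-(\kappa')^2\alpha + \kappa'|q| + |V| + C/|x|$. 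The defining condition $\kappa' > \kappa_0$ is precisely what forces $(\kappa')^2\alpha - \kappa'|q| - |V|$ to be uniformly bounded away from $0$ for $t<0$ and $|x|$ large, dominating the $O(1/|x|)$ correction. Hence there exist $\eta>0$ and $R_0>0$ such that $\P v \leq -\eta\, v$ on $\{|x|\geq R_0\}\times(-\infty,0]$. Enlarging $R_0$ so that $\{|x|\geq R_0\}\subset\O$, the positivity hypothesis applied to the compact set $\{|x|=R_0\}\subset\O$ yields $m := \inf_{|x|=R_0,\,t<0} u > 0$, and setting $c := m\, e^{\kappa' R_0}$ gives $u \geq c v$ on $\{|x|=R_0\}\times(-\infty,0]$.

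The remaining task, and the heart of the proof, is to extend this inequality to the whole exterior cylinder $\{|x|\geq R_0\}\times(-\infty,0]$; restricting to $t=0$ then gives the desired bound. It is convenient to work with the ratio $w := u/v$, which, by direct substitution using $\P u \geq 0$ and $\P v \leq -\eta v$, satisfies
\[
\t{\P} w \;\geq\; \eta\, w,\qquad \t{\P} w := \partial_t w - \Tr(A D^2 w) + \bigl(q - 2 A\nabla v/v\bigr)\cdot \nabla w,
\]
so that $(\t{\P}-\eta)(w-c) \geq \eta c > 0$ in the interior and $w-c \geq 0$ on the inner boundary $\{|x|=R_0\}\times(-\infty,0]$. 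The operator $\t{\P}-\eta$ has bounded coefficients and zero-order term $-\eta\leq 0$, the favorable sign for a parabolic minimum principle.

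The main obstacle is that the domain is unbounded both in space and in time, and no a priori information is available on $u$ as $|x|\to+\infty$ or $t\to-\infty$. The strategy, refining the argument of \cite[Lemma~3.1]{RR}, is by contradiction: assume $\inf(w-c) < 0$. The strict positivity of $(\t{\P}-\eta)(w-c) \geq \eta c > 0$ combined with the strong minimum principle forbids this infimum from being attained at any interior point; consequently, any minimizing sequence $(x_n, t_n)$ must escape to spatial or temporal infinity. Translating by $(x_n,t_n)$ and passing to a locally uniform subsequential limit (licit thanks to the uniform bounds on the coefficients of $\t{\P}$ and standard parabolic regularity) produces a limiting function on either $\{|x|\geq R_0\}\times\R$ (if only $t_n\to-\infty$) or $\R^N\times\R$ (if $|x_n|\to\infty$), satisfying the same strict inequality and attaining a nonpositive minimum at the origin. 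In the first case, the inner-boundary bound $w\geq c$ is inherited by the limit and directly contradicts the strong minimum principle. The delicate case is the second: here the inner-boundary bound is lost under translation, and the contradiction must come by exploiting the strictness $\eta c>0$ together with a parabolic Harnack chain emanating from $\{|x|=R_0\}$, which propagates a positive (albeit suboptimal) lower bound on $u$ to a neighborhood of each $(x_n,t_n)$ and thereby prevents the limiting ratio from vanishing identically. This combination of strict subsolution structure with Harnack-type positivity is the refinement over the bounded-domain argument of \cite[Lemma 3.1]{RR}.
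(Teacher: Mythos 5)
Your barrier computation is correct: $v(x)=e^{-\kappa'|x|}$ satisfies $\P v\leq -\e_0 v$ for some $\e_0>0$ when $|x|$ is large, and $\kappa'>\kappa_0$ is precisely what makes this work. The gap is in the comparison on the unbounded exterior cylinder, which you correctly identify as the heart of the proof but do not close. First, to pass to the locally uniform limit of $w_n(y,s):=w(x_n+y,t_n+s)$ you need uniform local \emph{upper} bounds on $w_n$, and none are available: the hypothesis $\P u\geq 0$ gives $u$ (hence $w$) only the supersolution inequality, so the local boundedness estimate --- which is a property of \emph{sub}solutions --- does not apply. A truncation such as $\min(w,2c)$ does not help, because a positive constant is a subsolution, not a supersolution, of $\t{\P}-\e_0$ (the zero-order coefficient $-\e_0$ being negative). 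Second, even granting the limit exists, the case $\inf w=0$ --- exactly the scenario where $u$ decays faster than $v$, the one you want to exclude --- produces a limit $w_\infty\geq 0$ with $w_\infty(0,0)=0$, and the strong minimum principle only forces $w_\infty\equiv 0$ backward in time; this is not by itself a contradiction. Your Harnack-chain remark is intended to rule it out, but a Harnack chain of spatial length $R$ costs a factor of order $e^{CR}$ with a constant $C$ depending on the coefficients that does not in general beat $\kappa'$; when $C>\kappa'$, the chain yields only $w(x_n,t_n)\gtrsim e^{-(C-\kappa')|x_n|}\to 0$, which does not survive the blow-up.

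The paper's proof avoids both issues by using a barrier that is compactly supported in space at each fixed time: $\eta(x,t)=\chi(|x|-R,\,\delta t+\delta^{-1}+h)$ with $\chi(r,s)=(1-r/s)^{\kappa s}$ for $r<s$ and $\chi=0$ otherwise. The comparison is carried out on the bounded time interval $(-\delta^{-2},0)$, and because $\eta$ has compact spatial support, it effectively takes place on a bounded parabolic cylinder where the classical comparison principle applies with no information needed at $|x|\to\infty$ or $t\to-\infty$; at the end one lets $\delta\to 0$, under which $\eta(\cdot,0)\to e^{-\kappa(|x|-R)}$. This construction sidesteps precisely the compactness-at-infinity machinery your Step~3 would require. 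As written, your argument needs an additional ingredient --- an a~priori growth bound on $u$, or a genuinely different localization --- to be made rigorous.
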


\begin{proof}
Let $R_0>0$ be sufficiently large so that $\R^N\setminus B_{R_0}\subset\O$. 
Take $\kappa$ satisfying \eqref{k2p}
and consider the function $\chi:\R^+\times\R^+\to\R$ defined by
\[ 
\chi(r,s):=
\begin{cases}
\displaystyle
\left(1-\frac{r}{s}\right)^{\kappa s} &
\text{if }0\leq r<s\\
0 & \text{if }r\geq s.
\end{cases}\]
Then define the function $\eta$ as follows:
\[\eta(x,t):=\chi(|x|-R\,, \delta t+\delta^{-1}+h),\]
where the parameters $R\geq R_0$ and $\delta,h>0$ will be chosen later.
The key properties of $\eta$ are that
it is compactly supported in space and that it converges to the function
$e^{-\kappa(|x|-R)}$ as the parameter $\delta$ tends to $0$.
We now proceed in two steps: first showing that $\eta$ is a
(generalised) subsolution of $\P=0$, next comparing it with~$u$.
\smallskip

\step{1} The function $\eta$ is a generalised subsolution of $\P=0$
provided $R,h$ are sufficiently large and $\delta$ is sufficiently small.
\medskip
\newline
For $-\delta^{-2}<t<0$ and $R<|x|<R+\delta t+\delta^{-1}+h$, we 
compute
$$\P\eta = \delta\,\partial_s\chi
-\frac{Ax\.x}{|x|^2}\,\partial_{rr}\chi
-\bigg(\frac{q\.x}{|x|}+\frac{\Tr A}{|x|}-\frac{Ax\.x}{|x|^3}\bigg)\partial_{r}\chi
	-V\chi.$$
Here and in what follows,
the functions $A,q,V$ and $\alpha$ are evaluated at $(x,t)$, 
whereas $\chi$ and its derivatives at $(r,s)=(|x|-R\,, \delta t+\delta^{-1}+h)$, which satisfy 
$$0<r<s\quad\text{and}\quad h<s<h+\delta^{-1}.$$
We impose $h\geq1/\kappa$, so that 
$\partial_{rr}\chi\geq0$. We then obtain
$$\P\eta_n \leq \delta\,\partial_s\chi-\alpha\,\partial_{rr}\chi
-\bigg(|q|+\frac{C}{|x|}\bigg)\partial_{r}\chi+|V|\chi,$$
with $C$ only depending on $N$ and the $L^\infty$ norm of the coefficients of $A$.
Observing~that 
$$\partial_r\chi=-\kappa\frac{s}{s-r}\,\chi,\qquad
\partial_{rr}\chi=\kappa\left(\kappa-\frac1s\right)\left(\frac{s}{s-r}\right)^2\chi,$$
$$\partial_s\chi=\kappa\chi\left(\log\left(1-\frac r s\right)+\frac{r}{s-r}\right)\leq
\kappa\chi\frac{s}{s-r},$$
we eventually derive
\[\begin{split}
\frac{\P\eta}{\chi} &\leq
\delta\kappa \frac{s}{s-r}-\alpha\kappa\left(\kappa-\frac1s\right) \left(\frac{s}{s-r}\right)^2
+\kappa\bigg(|q|+\frac{C}{|x|}\bigg)\frac{s}{s-r}+|V|\\
&= \left(\frac{s}{s-r}\right)^2\left(-\alpha\kappa^2
+\kappa\bigg(|q|+\frac{C}{|x|}+\delta\bigg)\frac{s-r}{s}+\frac\alpha s\,\kappa+
|V|\bigg(\frac{s-r}{s}\bigg)^2\right)\\
&\leq \left(\frac{s}{s-r}\right)^2\left(-\alpha\kappa^2
+\kappa\bigg(|q|+\frac{C}{R}+\delta+\frac\alpha{h}\bigg)+|V|\right)
\end{split}\]
We need to show that the right-hand side is less than or equal to $0$.
For this, we use~\eqref{k2p} which allows us to rewrite $\kappa=\t\kappa+\e$, with $\e>0$
and
$$\tilde \kappa:=\sup_{{|x|>R_1}\atop{t<0}}
\bigg(\frac{|q|}{2\alpha}+\sqrt{\frac{|q|^2}{4\alpha^2}
		+\frac{|V|}\alpha}\,\bigg),$$
for some sufficiently large $R_1>R_0$.
The quantity $\t\kappa$ is the supremum with
respect to $|x|>R_1$, $t<0$ of the largest root of the polynomial 
$$Q_{x,t}(X):=\alpha(x,t)X^2-|q(x,t)|X-|V(x,t)|.$$
It follows that $Q_{x,t}(\t\kappa)\geq0$ if $|x|>R_1$ and $t<0$, whence
\[\begin{split}Q_{x,t}(\kappa)
&= Q_{x,t}(\t\kappa)+\alpha(x,t)(2\t\kappa\e+\e^2)-|q(x,t)|\e\\
&\geq (2\alpha(x,t)\t\kappa-|q(x,t)|)\e+\alpha(x,t)\e^2\\
&\geq\alpha(x,t)\e^2,
\end{split}\]
where the last inequality follows from the explicit expression for $\t\kappa$.
As a consequence, for $|x|>R>R_1$ and $t<0$, we have that
$$-\alpha\kappa^2
+\kappa\bigg(|q|+\frac{C}{R}+\delta+\frac\alpha{h}\bigg)+|V|
\leq-\big(\inf\alpha)\e^2+\frac{\kappa C}{R}+\kappa\delta+\frac{\kappa\alpha}{h},$$
which is a negative constant provided
$R,h$ are large enough and $\delta$ is small enough.
In the end, under such conditions, there holds
$$\P\eta\leq0\quad\text{for }\;-\delta^{-2}<t<0,\ R<|x|<R+\delta t+h.$$

\step{2} Comparison between $u$ and $\eta$.
\medskip
\newline
By the previous step, we can take $R,h>0$ such that $\eta$ is 
a generalised subsolution of $\P=0$ for $-\delta^{-2}<t<0$ and $|x|>R$,
provided $\delta$ is sufficiently small.
By hypothesis, we can renormalise $u$ in such a way that 
$$\inf_{\su{R\leq |x|\leq R+h}{t<0,\,}}u=1.$$
At the time $t=-\delta^{-2}$, we have that
$$\eta(x,\delta^{-2})=\chi(|x|-R\,, h),$$
which is bounded from above by $1$
and vanishes for $|x|\geq R+h$.
Hence, $u(x,\delta^{-2})\geq \eta(x,\delta^{-2})$ for $|x|\geq R$.
We further have that
$u(x,t)\geq \eta(x,t)$ for $|x|=R$, $t<0$. 
Thus, for $\delta$ small enough, 
applying the parabolic comparison principle in the set
$(\R^N\setminus B_R)\times(-\delta^{-2},0)$,
we infer that $u\geq\eta$  in this set, and then in particular
\[u(x,0) \geq\eta(x,0)=\chi(|x|-R\,,\delta^{-1}+h).\]
Recalling the expression of $\chi$, for $R\leq |x|\leq R+\delta^{-1}+h$
we compute the above right-hand side getting
$$\chi(|x|-R\,,\delta^{-1}+h)=
\left(1-\frac{|x|-R}{\delta^{-1}+h}\right)^{\kappa (\delta^{-1}+h)},$$
which tends to $e^{-\kappa(|x|-R)}$ as $\delta\to0^+$.
This shows that $u(x,0)\geq e^{-\kappa(|x|-R)}$ for $|x|\geq R$, with $R$ depending on $\kappa$.
Since this is true for every $\kappa$ satisfying \eqref{k2p}, the proof is complete.
\end{proof}

\begin{proof}[Proof of \thm{u>0}] 
	One just applies \thm{ancient}
	to the stationary supersolution~$u$ of the equation $\partial_t u-\L u=0$.
	Observe that $\inf_{K}u(x)>0$ for any compact set $K\subset\O$, because
	$u$ is positive and continuous.
\end{proof}	

\begin{proof}[Proof of \thm{l1}]
	We know from \cite[Theorem 1.4]{BR4} that the 
	generalised \pe\ $\l$ admits a positive eigenfunction $\vp$, that is, 
	a positive solution of $-\L\vp=\l\vp$ in $\O$. Moreover, 
	$\vp=0$ on $\partial\O$ if $\O$ is smooth ($\neq\R^N$).
	Because $\l\geq0$, we have
	$$-\L\vp=\l\vp\geq0\inn\O.$$
	\thm{u>0} then implies that $\vp(x)\succ e^{-\kappa|x|}$,
	for all $\kappa$ satisfying \eqref{k2}.
	Assume by contradiction that there exist a nontrivial solution $u$ of
	$\L u=0$ in $\O$ and $\kappa$ satisfying~\eqref{k2} such that
	$$\varlimsup_{|x|\to+\infty}|u(x)|e^{\kappa|x|}<+\infty.$$
	Then, for $\kappa'<\kappa$ still satisfying \eqref{k2}, we 
	have that $u(x)\prec e^{-\kappa'|x|}\prec\vp(x)$. We
	claim that this entails $u\geq0$, whence $u>0$ by the \SMP,
	which is a contradiction due to~\thm{u>0}.
	
	We prove this claim by distinguishing the two different hypotheses
	of the theorem.
	
	\medskip
	{\em Case $\O=\R^N$}.\\
	Suppose that $u<0$ somewhere. Then, since $u(x)\prec\vp(x)$, the quantity
	$$C:=\max_{\R^N}\frac{-u}{\vp}$$
	is a well defined positive number. It follows that 
$\min_{\R^N}(C\vp+u)=0$.
	The strong maximum principle then yields $C\vp\equiv - u$, which  
	is impossible because $u\prec\vp$.
	This shows that necessarily $u\geq0$.
	
	\medskip
	{\em Case $\O\neq\R^N$ and \;$\varliminf_{\,x\to\partial\O}u(x)\geq0$}.\\ 
	The key tool here is the maximum principle in small domains given by \cite[Theorem~2.6]{BNV}.
	It provides some $\delta>0$ such that the maximum principle holds in any domain 
	with measure smaller than $\delta$.
	
	Consider the sequence of open sets $\seq{\O}$ defined by
	$$\O_n:=\{x\in\O\ :\ \dist(x,\partial\O)>1/n\}.$$
	Since $\O$ is an exterior domain, we have that $|\O\setminus\O_n|\to0$ as $n\to\infty$,
	whence $|\O\setminus\O_{\bar n}|<\delta$ for some $\bar n\in\N$.
	Next, using the fact that $u\prec\vp$ and that $\vp>0$ in $\O\supset\ol\O_{\bar n}$, 
	we can find $C>0$ such that $C\vp+u\geq0$ in $\ol\O_{\bar n}$.
	Finally, in the open set $\mc{O}:=\O\setminus\ol\O_{\bar n}$, the function $w:=C\vp+u$
	is a supersolution of $\L=0$ satisfying
	$$\varliminf_{x\to\partial\mc{O}}w(x)\geq0.$$
	Applying the maximum principle \cite[Theorem 2.6]{BNV} in every connected component of~$\mc{O}$,
	we find that $C\vp+u\geq0$ in $\mc{O}$ too.
	We have shown that $C\vp+u\geq0$ in $\O$. 
	
	Define
	$$C^*:=\inf\{C>0\ :\ C\vp+u\geq0 \text{ in }\O\}.$$
	Assume by contradiction that $C^*>0$. On one hand, there holds $C^*\vp+u\geq0$ in~$\O$.
	On the other, for any $\e\in(0,C^*)$, we necessarily have that $\inf_{\O_{\bar n}}((C^*-\e)\vp+u)<0$,
	because otherwise applying \cite[Theorem 2.6]{BNV} as before 
	we would get $(C^*-\e)\vp+u\geq0$ in $\O$, contradicting the definition of $C^*$. 
	There exists then a family of points
	$(x_\e)_{\e}$ in~$\O_{\bar n}$ for which $(C^*-\e)\vp(x_\e)+u(x_\e)<0$.
	This family is bounded as $\e\to0$ because $u\prec\vp$ and therefore it converges  (up to subsequences)
	to some $x_0\in\ol\O_n$. We deduce that $C^*\vp(x_0)+ u(x_0)\leq0$, and actually 
	$C^*\vp(x_0)+u(x_0)=0$ because $C^*\vp+u\geq 0$ in $\O$. The elliptic \SMP\ eventually yields
	$C^*\vp\equiv- u$, which is impossible because $u\prec\vp$. This shows that $C^*=0$. 
	Namely, $u\geq0$ in $\O$.
\end{proof}




\begin{thebibliography}{1}

\bibitem{Agmon}
S.~Agmon.
\newblock On positivity and decay of solutions of second order elliptic
equations on {R}iemannian manifolds.
\newblock In {\em Methods of functional analysis and theory of elliptic
	equations (Naples, 1982)}, pages 19--52. Liguori, Naples, 1983.

\bibitem{ABG}
A.~ Arapostathis, A.~Biswas, and D.~Ganguly.
\newblock Some Liouville-type results for eigenfunctions of elliptic operators.
\newblock {\em Preprint.}, 2018.

\bibitem{Continuation}
N.~Aronszajn.
\newblock A unique continuation theorem for solutions of elliptic partial
  differential equations or inequalities of second order.
\newblock {\em J. Math. Pures Appl. (9)}, 36:235--249, 1957.

\bibitem{BNV}
H.~Berestycki, L.~Nirenberg, and S.~R.~S. Varadhan.
\newblock The principal eigenvalue and maximum principle for second-order
elliptic operators in general domains.
\newblock {\em Comm. Pure Appl. Math.}, 47(1):47--92, 1994.

\bibitem{BR4}
H.~Berestycki and L.~Rossi.
\newblock Generalizations and properties of the principal eigenvalue of
elliptic operators in unbounded domains.
\newblock {\em Comm. Pure Appl. Math.}, 68(6):1014--1065, 2015.

\bibitem{BouKen}
J.~Bourgain and C.~E. Kenig.
\newblock On localization in the continuous {A}nderson-{B}ernoulli model in
higher dimension.
\newblock {\em Invent. Math.}, 161(2):389--426, 2005.

\bibitem{DKW}
B.~Davey, C.~Kenig, and J.-N. Wang.
\newblock The {L}andis conjecture for variable coefficient second-order
elliptic {PDE}s.
\newblock {\em Trans. Amer. Math. Soc.}, 369(11):8209--8237, 2017.

\bibitem{Furusho1}
Y.~Furusho and Y.~Ogura.
\newblock On the existence of bounded positive solutions of semilinear elliptic
equations in exterior domains.
\newblock {\em Duke Math. J.}, 48(3):497--521, 1981.

\bibitem{GLunique}
N.~Garofalo and F.-H. Lin.
\newblock Unique continuation for elliptic operators: a geometric-variational
  approach.
\newblock {\em Comm. Pure Appl. Math.}, 40(3):347--366, 1987.

\bibitem{GT}
D.~Gilbarg and N.~S. Trudinger.
\newblock {\em Elliptic partial differential equations of second order}, volume
224 of {\em Grundlehren der Mathematischen Wissenschaften [Fundamental
	Principles of Mathematical Sciences]}.
\newblock Springer-Verlag, Berlin, second edition, 1983.

\bibitem{Hormander}
L.~H\"ormander.
\newblock Uniqueness theorems for second order elliptic differential equations.
\newblock {\em Comm. Partial Differential Equations}, 8(1):21--64, 1983.

\bibitem{KenigLandis}
C.~E. Kenig.
\newblock Some recent quantitative unique continuation theorems.
\newblock In {\em S\'eminaire: \'Equations aux {D}\'eriv\'ees {P}artielles.
	2005--2006}, S\'emin. \'Equ. D\'eriv. Partielles, pages Exp. No. XX, 12.
\'Ecole Polytech., Palaiseau, 2006.

\bibitem{KSW}
C.~Kenig, L.~Silvestre, and J.-N. Wang.
\newblock On {L}andis' conjecture in the plane.
\newblock {\em Comm. Partial Differential Equations}, 40(4):766--789, 2015.

\bibitem{KLandis}
V.~A. Kondrat\cprime ev and E.~M. Landis.
\newblock Qualitative theory of second-order linear partial differential
equations.
\newblock In {\em Partial differential equations, 3 ({R}ussian)}, Itogi Nauki i
Tekhniki, pages 99--215, 220. Akad. Nauk SSSR, Vsesoyuz. Inst. Nauchn. i
Tekhn. Inform., Moscow, 1988.

\bibitem{Mesh}
V.~Z. Meshkov.
\newblock On the possible rate of decrease at infinity of the solutions of
second-order partial differential equations.
\newblock {\em Mat. Sb.}, 182(3):364--383, 1991.

\bibitem{Mu54}
C.~M\"uller.
\newblock On the behavior of the solutions of the differential equation
  {$\Delta U=F(x,U)$} in the neighborhood of a point.
\newblock {\em Comm. Pure Appl. Math.}, 7:505--515, 1954.

\bibitem{Plis}
A.~Pli\'s.
\newblock On non-uniqueness in {C}auchy problem for an elliptic second order
  differential equation.
\newblock {\em Bull. Acad. Polon. Sci. S\'er. Sci. Math. Astronom. Phys.},
  11:95--100, 1963.
  
\bibitem{Max}
M.~H. Protter and H.~F. Weinberger.
\newblock {\em Maximum principles in differential equations}.
\newblock Prentice-Hall Inc., Englewood Cliffs, N.J., 1967.

\bibitem{RR}
L.~Rossi and L.~Ryzhik.
\newblock Transition waves for a class of space-time dependent monostable
equations.
\newblock {\em Commun. Math. Sci.}, 12(5):879--900, 2014.


\end{thebibliography}
\end{document}